\definecolor{darkgreen}{rgb}{0,0.45,0}
\crefname{equation}{}{}
\crefname{item}{}{}
\definecolor{purple(x11)}{rgb}{0.5, 0.0, 0.5}
\tikzset{tick/.style={postaction={decorate,decoration={markings,
mark=at position 0.5 with {\draw[-] (0,.4ex) -- (0,-.4ex);}}}}}
\tikzset{tickx/.style={postaction={decorate,decoration={markings,mark=at position 0.5 with
{\fill circle [radius=.28ex];}}}}}
\newcommand{\tickar}
{\begin{tikzcd}[baseline=-0.5ex,cramped,sep=small,ampersand replacement=\&]{}\ar[r,tick]\&{}\end{tikzcd}}
\newtheorem*{thm*}{Theorem}
\theoremstyle{remark}
\newtheorem*{rmk*}{Remark}
\newtheorem*{lem*}{Lemma}
\theoremstyle{definition}
\newtheorem*{defi*}{Definition}
\newtheorem*{cor*}{Corollary}
\theoremstyle{definition}
\newtheorem*{examples*}{Examples}
\newtheorem{prop*}{Proposition}
\theoremstyle{plain}
\newtheorem{thm}{Theorem}[section]
\theoremstyle{plain}
\newtheorem{prop}[thm]{Proposition}
\theoremstyle{remark}
\newtheorem{rmk}[thm]{Remark}
\theoremstyle{plain}
\newtheorem{lem}[thm]{Lemma}
\theoremstyle{plain}
\newtheorem{cor}[thm]{Corollary}
\theoremstyle{definition}
\newtheorem{defi}[thm]{Definition}
\theoremstyle{definition}
\newtheorem{examples}[thm]{Example}
\DeclareFontFamily{U}{mathx}{\hyphenchar\font45}
\DeclareFontShape{U}{mathx}{m}{n}{
      <5> <6> <7> <8> <9> <10>
      <10.95> <12> <14.4> <17.28> <20.74> <24.88>
      mathx10
      }{}
\DeclareSymbolFont{mathx}{U}{mathx}{m}{n}
\DeclareMathAccent{\widecheck}{0}{mathx}{"71}
\newcommand{\wt}{\widetilde}
\newcommand{\padj}{\;\stackunder[-1pt]{$\dashv$}{$\scriptscriptstyle p\;$}\;} 
\newcommand{\WCat}{\caa{W}\textrm{-}\Cat}
\newcommand{\VMat}{\ca{V}\textrm{-}\B{Mat}}
\newcommand{\ca}{\mathcal}
\newcommand{\caa}{\mathbb} 
\newcommand{\Hom}{\ensuremath{\mathrm{Hom}}}
\newcommand{\Comod}{\ensuremath{\mathbf{Comod}}}
\newcommand{\Mod}{\ensuremath{\mathbf{Mod}}}
\newcommand{\Cat}{\B{Cat}}
\newcommand{\Alg}{\ensuremath{\mathbf{Alg}}}
\newcommand{\Coalg}{\ensuremath{\mathbf{Coalg}}}
\newcommand{\Mon}{\ensuremath{\mathbf{Mon}}}
\newcommand{\MonCat}{\mathbf{MonCat}}
\newcommand{\Comon}{\ensuremath{\mathbf{Comon}}}
\newcommand{\ob}{\ensuremath{\mathrm{ob}}}
\newcommand{\opl}{\mathrm{opl}}
\newcommand{\lax}{\mathrm{lax}}
\newcommand{\Cart}{\ensuremath{\mathrm{Cart}}}
\newcommand{\Cocart}{\ensuremath{\mathrm{Cocart}}}
\newcommand{\B}{\mathbf}
\newcommand{\pullbackcorner}[1][dr]{\save*!/#1+1.2pc/#1:(1,-1)@^{|-}\restore}
\newcommand{\op}{\mathrm{op}}
\newcommand{\sbul}{\scriptstyle\bullet}
\newcommand{\tick}{\object@{|}}
\newcommand{\VCocat}{\ca{V}\textrm{-}\B{Cocat}}
\newcommand{\VCat}{\ca{V}\textrm{-}\B{Cat}}
\newcommand{\VMod}{\ca{V}\textrm{-}\B{Mod}}
\newcommand{\VComod}{\ca{V}\textrm{-}\B{Comod}}
\newcommand{\VGrph}{\ca{V}\textrm{-}\B{Grph}}
\newcommand{\Set}{\B{Set}}
\newcommand{\matr}[3]{\SelectTips{eu}{10}\xymatrix@C=.2in{#1\colon #2\ar[r]|-{\object@{|}} & #3}}
\newcommand{\proar}[3]{\SelectTips{eu}{10}\xymatrix@C=.2in{#1\colon #2\ar[r]|-{\sbul} & #3}}
\newcommand{\Fib}{\B{Fib}}
\newcommand{\OpFib}{\B{OpFib}}
\begin{document}
\title{On Enriched Fibrations*}

\author{Christina Vasilakopoulou}
\thanks{*This is a provisory draft of a paper whose definitive version is due to be published in the
``Cahiers de Topologie et Géométrie Différentielle Catégoriques".}
\address{Department of Mathematics, University of California, Riverside, 900 University Avenue, 92521, USA}
\email{vasilak@ucr.edu}

\begin{abstract}
We introduce the notion of an enriched fibration, i.e. a fibration whose total category and base
category are enriched in those of a monoidal fibration in an appropriate way. Furthermore, we provide
a way to obtain such a structure, starting from actions of monoidal categories
with parameterized adjoints. The motivating goal is to capture certain example cases,
like the fibration of modules over algebras enriched in the opfibration of comodules over
coalgebras.
\end{abstract}

\maketitle

\setcounter{tocdepth}{1}
\tableofcontents

\section{Introduction}

Enriched category theory \cite{Kelly}, as well as the theory of fibrations \cite{Grothendieckcategoriesfibrees},
have both been of central importance to developments in many contexts.
Both are classical theories for formal category theory; however,
they do not seem to `go together' in some evident way.

The goal of the present work is to introduce a notion of an \emph{enriched fibration}. This should combine
elements of both concepts in an appropriate and natural way; the enriched structure of a category
cannot really be internalized in order to provide a definite answer.
In any case, `being enriched in' and `being internal to' are two major but separate generalizations
of ordinary category theory, whereas `being fibred over' is often considered as a third one.

More explicitly, we would like to characterize a fibration as being enriched in
some special kind of fibration, serving similar purposes as the monoidal base of usual
enriched categories; this has already been identified as a \emph{monoidal fibration}
\cite{Framedbicats}. For the desired enriched fibration definition, there are two main factors that determine its relevance.
First of all, it should be able to adequately capture certain cases
that first arose in \cite{PhDChristina} and furthermore studied in \cite{Measuringcomonoid,Measuringcomodule,VCocats},
and in fact motivated these explorations. Further details of these examples and how
they ultimately fit in the described framework can be found in \cref{sec:applications}. The original driving example case
is the enrichment of algebras in coalgebras via Sweedler's \emph{measuring coalgebra} construction \cite{Sweedler},
together with the enrichment of a global category of modules in comodules; the latter categories are respectively
fibred and opfibred over algebras and coalgebras. This also extends to their many-object generalizations, namely
(enriched) categories and cocategories and their (enriched) modules and comodules. These cases can be roughly depicted as
\begin{displaymath}
 \xymatrix @C=.6in @R=.4in
{\Mod\ar@{-->}[r]^-{\mathrm{enriched}}
\ar[d]_-{\mathrm{fibred}} & \Comod
\ar[d]^-{\mathrm{opfibred}} \\
\Alg(\ca{V})\ar@{-->}[r]_-{\mathrm{enriched}} &
\Coalg(\ca{V})}\qquad\qquad
 \xymatrix @C=.6in @R=.4in
{\ca{V}\text{-}\Mod\ar@{-->}[r]^-{\mathrm{enriched}}
\ar[d]_-{\mathrm{fibred}} & \ca{V}\text{-}\Comod
\ar[d]^-{\mathrm{opfibred}} \\
\ca{V}\textrm{-}\B{Cat}\ar@{-->}[r]_-{\mathrm{enriched}} &
\ca{V}\textrm{-}\B{Cocat}.}
\end{displaymath}

Secondly, the introduced enriched fibration concept should theoretically constitute an as-close-as-possible fibred analogue
of the usual enrichment of categories. In order to initiate such an effort, we provide a theorem which
under certain assumptions ensures the existence of such a structure. This theorem lifts
a standard result, which combines the theory of \emph{actions} of monoidal categories and parameterized adjunctions
to produce an enrichment \cite{enrthrvar,AnoteonActions}, to the fibred context.

Notably, a strongly related notion called \emph{enriched indexed category} has been studied, from a slightly different point of view,
originally in \cite{Bunge} and also independently in \cite{Enrichedindexedcats}. However, the
main definitions and constructions diverge from the ones presented here. We postpone a short discussion
on these differences until the very end of the paper, \cref{sec:discussion}.

Finally, it should be indicated that this paper deliberately includes only what is necessary to first of all
sufficiently describe the examples at hand.
It elaborates on and extends a sketched narrative from \cite[\S 8.1]{PhDChristina},
and provides the first steps in such a research direction. Future work may build on the current development,
towards a theory of enriched fibrations and related structures.

\subsection*{Acknowledgements}
The author would like to thank Martin Hyland for posing the driving questions that led to this work, as well as providing valuable insight; also Marta Bunge
for carefully reviewing this manuscript and offering multiple useful comments; finally Mitchell Buckley, for many discussions that affected
various parts of this work.

\section{Background}

In this section, we recall some basic definitions and known results which serve
as background material in what follows, and we also fix terminology.

\subsection{Monoidal categories, actions and enrichment}

We assume familiarity with the basics of monoidal categories, see \cite{BraidedTensorCats,MacLane}.
A monoidal category is denoted by $(\ca{V},\otimes,I)$ with associator and left and right
unit constraints $a,\ell,r$. A lax monoidal structure on a functor $F\colon\ca{V}\to\ca{W}$ between monoidal
categories is denoted by $(\phi,\phi_0)$, with components $\phi_{AB}\colon FX\otimes FY\to F(X\otimes Y)$
and $\phi_0\colon I\to FI$ satisfying usual axioms.
If these are isomorphisms/identities, this is a strong/strict monoidal structure.

A (left) monoidal closed category is one where the functor $(-\otimes X)$ has a right adjoint $[X,-]$,
for all objects $X$. This induces the internal hom functor $[-,-]\colon\ca{V}^\op\times\ca{V}\to\ca{V}$,
as a result of the classic \emph{parameterized adjunctions} theorem \cite[\S IV.7.3]{MacLane}:

\begin{thm}\label{thm:parameterCat}
Suppose that, for a functor of two variables $F:\ca{A}\times\ca{B}\to\ca{C}$,
there exists an adjunction
\begin{displaymath}
 \xymatrix @C=.6in
{\ca{A}\ar@<+.8ex>[r]^-{F(-,B)}\ar@{}[r]|-\bot &
\ca{C}\ar@<+.8ex>[l]^-{G(B,-)}}
\end{displaymath}
for each object $B\in\ca{B}$, with an isomorphism
$\ca{C}(F(A,B),C)\cong\ca{A}(A,G(B,C))$
natural in $A$ and $C$. Then, there is a unique way to make 
$G$ into a functor of two variables
$\ca{B}^\mathrm{op}\times\ca{C}\to\ca{A}$
for which the isomorphism is natural also in $B$.
\end{thm}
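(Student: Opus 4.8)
The plan is to use the Yoneda lemma to read off the missing functoriality of $G$ in the $\ca{B}$-variable directly from the naturality requirement, and then to verify that this forced definition is consistent and unique. Throughout I write $\varphi_{A,B,C}\colon\ca{C}(F(A,B),C)\xrightarrow{\sim}\ca{A}(A,G(B,C))$ for the given family of isomorphisms, natural in $A$ and $C$ for each fixed $B$.

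First I would fix $C\in\ca{C}$ together with a morphism $g\colon B\to B'$ in $\ca{B}$, and construct the comparison morphism $G(g,C)\colon G(B',C)\to G(B,C)$ (the variance being reversed, as required for a functor out of $\ca{B}^\op$). Applying $F(-,-)$ to $g$ gives $F(A,g)\colon F(A,B)\to F(A,B')$, hence a map $\ca{C}(F(A,g),C)\colon\ca{C}(F(A,B'),C)\to\ca{C}(F(A,B),C)$ natural in $A$. Conjugating by the isomorphisms $\varphi$ produces a transformation $\ca{A}(A,G(B',C))\to\ca{A}(A,G(B,C))$ natural in $A$, that is, a natural transformation between the representable functors $\ca{A}(-,G(B',C))$ and $\ca{A}(-,G(B,C))$. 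By the Yoneda lemma this transformation is induced by a unique morphism of $\ca{A}$, which I take as the definition of $G(g,C)$. (Equivalently, one could transpose the composite of the $B'$-counit with $F(-,g)$ across the $B$-adjunction; the Yoneda formulation is cleaner for checking coherence.)

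Next I would check that these assignments assemble into a single functor $\ca{B}^\op\times\ca{C}\to\ca{A}$. Preservation of identities and composites in the $\ca{B}$-variable follows from functoriality of $F(A,-)$ together with the uniqueness clause of Yoneda: the operations $\ca{C}(F(A,-),C)$ are themselves functorial in $B$, and conjugation by isomorphisms preserves this, so the represented morphisms compose correctly. Joint functoriality — that the squares built from a $\ca{B}$-morphism $g$ and a $\ca{C}$-morphism $h$ commute, so that the separate actions in $B$ and in $C$ interchange — is the step I expect to require the most care; it reduces to the facts that $\varphi$ is natural in $C$ for every fixed $B$ and that $F$ is a genuine bifunctor, so that both routes around the square correspond under Yoneda to the same natural transformation in $A$, hence to the same arrow of $\ca{A}$.

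Finally, naturality of $\varphi$ in $B$ and uniqueness of the structure are immediate from the construction. The required naturality square, relating $\varphi_{A,B,C}$ and $\varphi_{A,B',C}$ through $F(A,g)$ and $G(g,C)$, is precisely the defining property of $G(g,C)$ read back through Yoneda, so it holds by definition. Conversely, any bifunctor structure on $G$ rendering $\varphi$ natural in $B$ must satisfy this same square; since Yoneda pins down the representing morphism uniquely, any such structure necessarily coincides with the one just built, which yields the asserted uniqueness.
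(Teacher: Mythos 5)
Your proof is correct, and it is essentially the classical argument: the paper does not prove this theorem itself but cites it as Mac Lane's ``adjunctions with a parameter'' (\S IV.7.3), whose proof is exactly your construction --- define $G(g,C)$ as the unique arrow representing the conjugate of $\ca{C}(F(A,g),C)$ under Yoneda, then check functoriality, interchange with the $\ca{C}$-variable, and uniqueness from the forced naturality square. No gaps; the interchange step you flag as delicate does indeed reduce, as you say, to naturality of $\varphi$ in $C$ plus bifunctoriality of $F$.
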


The functor $G$ is called the \emph{(right) parameterized adjoint} of $F$, and we denote this as $F\padj G$. In particular,
$\otimes\padj [-,-]$ in any monoidal (left) closed category. We could also decide to fix the other
parameter, and have that $F(A,-)\dashv H(A,-)$ for $H\colon\ca{A}^\op\times\ca{C}\to\ca{B}$. For a 2-categorical proof and generalizations,
see \cite{Multivariableadjunctions}.

We now recall some basics of the theory of actions of monoidal categories, \cite{AnoteonActions}.

\begin{defi}\label{def:action}
A \emph{(left) action} of a monoidal category $\ca{V}$ on a category $\ca{D}$ is given by a functor
$*\colon\ca{V}\times\ca{D}\to\ca{D}$ along with two natural isomorphisms $\chi,\nu$ with components
\begin{equation}\label{eq:chinu}
 \chi_{XYD}\colon (X\otimes Y)* D\xrightarrow{\;\sim\;}X*(Y* D),\qquad
 \nu_{D}\colon I* D\xrightarrow{\;\sim\;}D
\end{equation}
satisfying the commutativity of
\begin{equation}\label{actiondiag} 
\xymatrix{((X\otimes Y)\otimes
Z)*D\ar[r]^-{\chi}\ar[d]_-{a*1} & (X\otimes Y)*(Z*D)\ar[r]^-{\chi}
& X*(Y*(Z*D))\\ (X\otimes(Y\otimes Z))*D\ar[rr]_-{\chi} &&
X*((Y\otimes Z)*D)\ar[u]_-{1*\chi}}
\end{equation}
\begin{displaymath}
\xymatrix@R=.3in@C=.15in{(I\otimes X)*D\ar[rr]^-{\chi}\ar[dr]_-{l*1} 
&& I*(X*D)\ar[dl]^-{\nu}\\ &
X*D &} \;\; 
\xymatrix@R=.3in@C=.15in{(X\otimes I)*D\ar[rr]^-{\chi}\ar[dr]_-{r*1} 
&& X*(I*D)\ar[dl]^-{1*{\nu}}\\
& X*D &}
\end{displaymath}
The category $\ca{D}$ is called a $\ca{V}$-\emph{representation}, or a $\ca{V}$-\emph{actegory} \cite{Mccruddencoalgebroidsreps}.
\end{defi}

For example, every monoidal category has a canonical action on itself via its
tensor product, $\otimes=*\colon\ca{V}\times\ca{V}\to\ca{V}$, and $\chi=a,\nu=\ell$; it is called the \emph{regular} $\ca{V}$-representation.
Moreover, for any monoidal closed category, its internal hom constitutes an action of the monoidal
$\ca{V}^\op$ (with the same tensor product $\otimes^\op$) on $\ca{V}$, via the standard natural isomorphisms 
\begin{displaymath}
\chi_{XYZ}:[X\otimes Y,D]\xrightarrow{\sim}[X,[Y,Z]], \quad \nu_D:[I,D]\xrightarrow{\sim}D
\end{displaymath}
which satisfy \cref{actiondiag} using the transpose diagrams under the tensor-hom adjunction.

Familiarity with enrichment theory is also assumed, see \cite{Kelly}.
We denote the 2-category of $\ca{V}$-enriched categories, along with enriched functors and enriched natural transformations, $\VCat$;
we call $\ca{V}$ the monoidal base of the enrichment.
If $\ca{A}$ is a $\ca{V}$-enriched category
with hom-objects $\ca{A}(A,B)\in\ca{V}$, we will write $j_A\colon I\to \ca{A}(A,A)$ for its identites
and $M_{ABC}\colon\ca{A}(B,C)\otimes\ca{A}(A,B)\to\ca{A}(B,C)$ for the composition. 
Its \emph{underlying category} $\ca{A}_0$ has the same
objects, while morphisms $f\colon A\to B$ in $\ca{A}_0$ are just `elements' $f\colon I\to\ca{A}(A,B)$ in $\ca{V}$,
i.e. $\ca{A}_0(A,B)=\ca{V}(I,\ca{A}(A,B))$ as sets.
In fact, we can define a functor
\begin{equation}\label{enrichedhomfunctor}
 \ca{A}(-,-)\colon \ca{A}_0^\op\times\ca{A}_0\to\ca{V}
\end{equation}
called the \emph{enriched hom-functor},
which maps $(A,B)$ to $\ca{A}(A,B)$, and a pair of arrows 
$(A'\xrightarrow{f}A,B\xrightarrow{g}B')$
in $\ca{A}_0^\mathrm{op}\times\ca{A}_0$ to the 
top arrow
\begin{displaymath}
 \xymatrix@R=.3in
 {\ca{A}(A,B)\ar@{-->}[rr]^-{\ca{A}(f,g)} 
 \ar[d]_-{r^{-1}} && \ca{A}(A',B') \\
 \ca{A}(A,B)\otimes I\ar[d]_-{1\otimes f} && 
 \ca{A}(B,B')\otimes\ca{A}(A',B)\ar[u]_-M \\\
 \ca{A}(A,B)\otimes\ca{A}(A',A)\ar[r]_-M &
 \ca{A}(A',B)\ar[r]_-{l^{-1}} & 
 I\otimes\ca{A}(A',B)\ar[u]_-{g\otimes 1}}
\end{displaymath}

Speaking loosely, we say that an ordinary category $\ca{C}$
is enriched in a monoidal category $\ca{V}$ when we have a 
$\ca{V}$-enriched category $\ca{A}$ (often denoted \underline{$\ca{C}$}) and an isomorphism
$\ca{A}_0\cong\ca{C}$. Consequently, \emph{to be enriched 
in} $\ca{V}$ is not a property, 
but additional structure. Of course, 
a given ordinary category may be enriched 
in more than one 
monoidal category; this is evident in view of \cref{changeofbase}.
But also, a category  $\ca{C}$ may be enriched in $\ca{V}$ in more than one way.

\begin{prop}[Change of Base]\label{changeofbase}
Suppose $F:\ca{V}\to\ca{W}$ is a lax
monoidal functor between two monoidal categories. There 
is an induced 2-functor 
\begin{displaymath}
\wt{F}\colon\ca{V}\textrm{-}\B{Cat}\longrightarrow\ca{W}\textrm{-}\B{Cat}
\end{displaymath}
between the 2-categories of $\ca{V}$ and $\ca{W}$-enriched categories, 
which maps any $\ca{V}$-category $\ca{A}$ to a 
$\ca{W}$-category with the same objects as $\ca{A}$ and hom-objects 
$F\ca{A}(A,B)$.
\end{prop}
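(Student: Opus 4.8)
The plan is to construct $\wt{F}$ separately on objects, $1$-cells and $2$-cells of $\ca{V}\textrm{-}\B{Cat}$, and then to verify compatibility with the $2$-categorical structure. Throughout I write $(\phi,\phi_0)$ for the lax monoidal structure of $F$, with $\phi_{XY}\colon FX\otimes FY\to F(X\otimes Y)$ and $\phi_0\colon I\to FI$.

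On objects, given a $\ca{V}$-category $\ca{A}$ with composition maps $M$ and identities $j_A$, I would define $\wt{F}\ca{A}$ to have the same objects, hom-objects $F\ca{A}(A,B)$, composition
\[
F\ca{A}(B,C)\otimes F\ca{A}(A,B)\xrightarrow{\;\phi\;}F\big(\ca{A}(B,C)\otimes\ca{A}(A,B)\big)\xrightarrow{\;FM\;}F\ca{A}(A,C),
\]
and identities $I\xrightarrow{\phi_0}FI\xrightarrow{Fj_A}F\ca{A}(A,A)$. The first task is then to check the associativity and unit axioms of an enriched category for $\wt{F}\ca{A}$. Expanding both composites in the associativity axiom for $\wt{F}\ca{A}$, I would use naturality of $\phi$ to bring the two iterated composites into a common $F(\,\cdot\,)$, invoke the associativity coherence of $(\phi,\phi_0)$ to absorb the associator, and finally apply $F$ to the associativity axiom of $\ca{A}$; the unit axioms reduce similarly to the unit coherence of $(\phi,\phi_0)$ together with the unit axioms of $\ca{A}$.

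Next, for a $\ca{V}$-functor $T\colon\ca{A}\to\ca{B}$ with components $T_{AB}\colon\ca{A}(A,B)\to\ca{B}(TA,TB)$, I would let $\wt{F}T$ act as $T$ on objects with components $FT_{AB}$; for a $\ca{V}$-natural transformation $\alpha$ with components $\alpha_A\colon I\to\ca{B}(TA,SA)$, I would take $\wt{F}\alpha$ to have components $I\xrightarrow{\phi_0}FI\xrightarrow{F\alpha_A}F\ca{B}(TA,SA)$. Checking that $\wt{F}T$ preserves composition and identities, and that $\wt{F}\alpha$ satisfies the $\ca{W}$-naturality condition, are diagram chases combining functoriality of $F$ with the naturality of $\phi$ and of $\phi_0$. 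Finally, $2$-functoriality reduces to verifying preservation of identity $1$-cells, of composites of $\ca{V}$-functors, and of vertical and horizontal composites of $\ca{V}$-natural transformations, each of which is immediate from functoriality of $F$ once the composites are written out.

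The hard part will be bookkeeping rather than anything conceptual: the associativity axiom for $\wt{F}\ca{A}$ is where the two lax monoidal coherence conditions are genuinely used, and some care is required to thread the naturality square of $\phi$ through the instance of $FM$ applied to a single tensor factor before the associativity axiom of $\ca{A}$ can be invoked inside $F$. This is in essence the classical Eilenberg--Kelly change-of-base construction, so I expect no surprises beyond confirming that it goes through with the conventions fixed above.
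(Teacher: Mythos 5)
Your construction coincides with the paper's: the paper's sketch of proof defines the composition of $\wt{F}\ca{A}$ as $FM\circ\phi$ and the identities as $Fj_A\circ\phi_0$, exactly as you do, and leaves the remaining verifications (associativity/unit axioms, the action on $\ca{V}$-functors and $\ca{V}$-natural transformations, and 2-functoriality) implicit. Your proposal is correct and simply fills in those standard Eilenberg--Kelly details along the same lines.
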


\begin{proof}[Sketch of proof]
On the level of objects, the composition and identities are given by
\begin{displaymath}
\xymatrix @C=.7in @R=.5in
{F\ca{A}(B,C)\otimes F\ca{A}(A,B)\ar[d]_-{\phi_{\ca{A}(B,C),\ca{A}(A,B)}}
\ar @{-->}[r] & F\ca{A}(A,C)\\
F(\ca{A}(B,C)\otimes\ca{A}(A,B))\ar[ru]_-{FM_{ABC}}}\qquad
\xymatrix @C=.7in @R=.5in
{I_\ca{W}\ar[d]_-{\phi_0}\ar @{-->}[r] & F\ca{A}(A,A)\\
FI_\ca{V}\ar[ur]_-{Fj_A} &}
\end{displaymath}
\end{proof}

A crucial result for what follows is that given a category $\ca{D}$ with an action
from a monoidal category $\ca{V}$ with a parameterized
adjoint, we obtain a $\ca{V}$-enriched category.

\begin{thm}\label{actionenrich}
Suppose that $\ca{V}$ is a monoidal category which acts on a category $\ca{D}$ via a functor 
$*:\ca{V}\times\ca{D}\to\ca{D}$, such that $-*D$ has a right adjoint $F(D,-)$ for every $D\in\ca{D}$.
Then we can enrich $\ca{D}$ in $\ca{V}$, with hom-objects $\underline{\ca{D}}(A,B)=F(A,B)$.
\end{thm}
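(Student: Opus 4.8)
The plan is to use \cref{thm:parameterCat} to promote the family of adjoints $F(D,-)$ to a genuine two-variable functor, and then to define the enriched identities and composition as adjoint transposes of the structure morphisms of the action, so that the $\ca{V}$-category axioms reduce to the coherence diagrams of \cref{def:action}.

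\emph{Construction of the hom-functor.} Since $(-*D)\dashv F(D,-)$ for every $D$, with adjunction isomorphism $\ca{D}(X*D,E)\cong\ca{V}(X,F(D,E))$ natural in $X$ and $E$, \cref{thm:parameterCat} (applied with $\ca{A}=\ca{V}$ and $\ca{B}=\ca{C}=\ca{D}$) upgrades $F$ to a functor of two variables $\ca{D}^\op\times\ca{D}\to\ca{V}$ for which the isomorphism is natural in all three arguments $X,D,E$. This supplies the hom-objects $\underline{\ca{D}}(A,B)=F(A,B)$ together with their contravariant-covariant functoriality. Write $\varepsilon_{D,E}\colon F(D,E)*D\to E$ for the counit of $(-*D)\dashv F(D,-)$, i.e.\ the transpose of the identity on $F(D,E)$.

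\emph{Identities and composition.} I would define $j_A\colon I\to F(A,A)$ as the transpose of the unit isomorphism $\nu_A\colon I*A\xrightarrow{\sim}A$ under $\ca{V}(I,F(A,A))\cong\ca{D}(I*A,A)$, and $M_{ABC}\colon F(B,C)\otimes F(A,B)\to F(A,C)$ as the transpose of the composite
\begin{displaymath}
(F(B,C)\otimes F(A,B))*A\xrightarrow{\;\chi\;}F(B,C)*(F(A,B)*A)\xrightarrow{\;1*\varepsilon_{A,B}\;}F(B,C)*B\xrightarrow{\;\varepsilon_{B,C}\;}C
\end{displaymath}
under $\ca{V}(F(B,C)\otimes F(A,B),F(A,C))\cong\ca{D}((F(B,C)\otimes F(A,B))*A,C)$. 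The two unit laws are then checked by transposing them across the adjunction into $\ca{D}$; after using naturality of $\varepsilon$ they reduce exactly to the two triangle diagrams of \cref{def:action} relating $\chi$, $\nu$, and the unit constraints $\ell,r$. For associativity, I would transpose both legs of the square into morphisms $(F(C,D)\otimes F(B,C)\otimes F(A,B))*A\to D$ and compare them: the associator $a$ of $\ca{V}$ corresponds, under $(-*A)$, to the edge $a*1$ of the pentagon \cref{actiondiag}, while the two nested applications of $\chi$ and the counits $\varepsilon$ fill in the remaining sides, so that \cref{actiondiag} together with naturality of $\chi$ and $\varepsilon$ forces the two legs to agree.

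I expect the associativity axiom to be the main obstacle, and the difficulty is essentially bookkeeping. Because $M$ is itself a transpose, composing two copies of $M$ and re-transposing requires repeated use of the triangle identities and of naturality of the counit in order to unwind the inner $M$ into a single application of $\chi$ followed by $\varepsilon$, before the pentagon \cref{actiondiag} can be invoked. Once both legs have been rewritten as one composite of $\chi$'s and $\varepsilon$'s acting on $A$, the identification is essentially forced by \cref{actiondiag}; reaching that common normal form is the delicate step.
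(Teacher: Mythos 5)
Your construction coincides with the paper's own sketch: the hom-functor via \cref{thm:parameterCat}, identities as the transpose of $\nu$, composition as the transpose of $\varepsilon\circ(1*\varepsilon)\circ\chi$, and the axioms reduced to the action coherence diagrams. The only piece the paper records that you leave implicit is the identification of the underlying category, $\underline{\ca{D}}_0(A,B)=\ca{V}(I,F(A,B))\cong\ca{D}(I*A,B)\cong\ca{D}(A,B)$, which is needed for the claim that it is $\ca{D}$ itself that gets enriched.
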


The proof and further details can be found in \cite{AnoteonActions} or \cite[\S~4.3]{PhDChristina}.
Briefly, due to the adjunction $-*D\dashv F(D,-)$, we have natural isomorphisms
\begin{equation}\label{eq:natiso}
\ca{D}(X*D,E)\cong\ca{V}(X,F(D,E))
\end{equation}
which give rise to a functor $F\colon\ca{D}^\op\times\ca{D}\to\ca{V}$ by \cref{thm:parameterCat}. This serves
as the enriched hom-functor of the induced enrichment of $\ca{D}$ in $\ca{V}$: we can define a composition law
$F(B,C)\otimes F(A,B)\to F(A,C)$ as the adjunct of the composite
\begin{equation}\label{compositionunderadjunction}
\left(F(B,C)\otimes F(A,B)\right)*A\xrightarrow{\chi}F(B,C)*\left(F(A,B)*A\right)\xrightarrow{1*\varepsilon_B}F(B,C)*B\xrightarrow{\varepsilon_C}C 
\end{equation}
and identities $I\to F(A,A)$ as the adjuncts of
\begin{equation}\label{identityunderadjunction}
I*A\xrightarrow{\nu}A
\end{equation}
where $\chi$ and $\nu$ give the action structure \cref{eq:chinu}
and $\varepsilon$ is the counit of the adjunction. The associativity and identity axioms for the enrichment can be established using
the action axioms. Finally, $\underline{\ca{D}}_0\cong\ca{D}$ since they have the same objects, and
\begin{displaymath}
 \underline{\ca{D}}_0(A,B)=\ca{V}(I,F(A,B))\stackrel{\cref{eq:natiso}}{\cong}\ca{D}(I*A,B)\stackrel{\nu}{\cong}\ca{D}(A,B).
\end{displaymath}
In fact, \cref{actionenrich} gives part of one direction of an equivalence
\begin{displaymath}
 \ca{V}\text{-}\mathrm{Rep}_{\textrm{cl}}\simeq
\ca{V}\text{-}\B{Cat}_{\otimes}
\end{displaymath}
between \emph{closed} $\ca{V}$-representations (those with action equipped with a pa\-ra\-me\-te\-ri\-zed
adjoint) and \emph{tensored} $\ca{V}$-categories (those with specific weighted limits), for $\ca{V}$ a monoidal closed
category. This equivalence, discussed in \cite[\S 6]{AnoteonActions}, is a special case of the
much more general \cite[Theorem 3.7]{enrthrvar} for bicategory-enriched categories.

\begin{rmk}\label{rmk:VenrichedinV}
When $\ca{V}$ is monoidal closed,
the regular action $\otimes\colon\ca{V}\times\ca{V}\to\ca{V}$ has a parameterized adjoint $[-,-]\colon\ca{V}^\op\times\ca{V}\to\ca{V}$.
We thus re-discover the well-known enrichment of a monoidal closed category in itself via the internal hom,
as a direct application of \cref{actionenrich}.
\end{rmk}

\subsection{Pseudomonoids and pseudomodules}

Recall that a \emph{monoidal 2-category} $(\ca{K},\otimes,I)$ is a 2-category equipped with a pseudofunctor
$\otimes\colon\ca{K}\times\ca{K}\to\ca{K}$ and a unit $I\colon\B{1}\to\ca{K}$ which are associative and unital up to coherence
equivalence, see \cite{CoherenceTricats}.

\begin{defi}\cite[\S 3]{Monoidalbicatshopfalgebroids}\label{def:pseudomon}
A \emph{pseudomonoid} $A$ in $\ca{K}$ is an object equipped with multiplication $m\colon A\otimes A\to A$
and unit $j\colon I\to A$ along with invertible 2-cells
satisfying coherence conditions.
\begin{equation}\label{pseudomon2cells}
\begin{tikzcd}
A\otimes A\otimes A\ar[r,"1\otimes m"]\ar[d,"m\otimes1"']\ar[dr,phantom,"\scriptstyle\stackrel{a}{\cong}"description]
& A\otimes A\ar[d,"m"]
& A\otimes I\ar[r,"1\otimes j"]\ar[dr,"\sim"'] & A\otimes A\ar[d,"m","\stackrel{\ell}{\cong}\quad"'near start,
"\quad\;\stackrel{r}{\cong}"near start] &
I\otimes A\ar[l,"j\otimes1"']\ar[dl,"\sim"] \\
A\otimes A\ar[r,"m"'] & A && A &
\end{tikzcd}
\end{equation}
\end{defi}

The notion of a pseudomodule for a pseudomonoid in a monoidal 2-category (or bicategory) can be found in similar contexts 
\cite{Marmolejopseudomonads,LackPseudomonads}; conceptually, as it is the case for modules for monoids in a monoidal category, it arises as 
a pseudoalgebra for the pseudomonad $(A\otimes-)$ induced by a pseudomonoid $A$ in $\ca{K}$.

\begin{defi}\label{def:pseudomod}
A (left) $A$-\emph{pseudomodule} is an object $M$ in $(\ca{K},\otimes,I)$ equipped with $\mu\colon A\otimes M\to M$ (the \emph{pseudoaction})
and invertible 2-cells
\begin{equation}\label{pseudomod2cells}
 \begin{tikzcd}
A\otimes A\otimes M\ar[r,"1\otimes\mu"]\ar[d,"m\otimes1"']\ar[dr,phantom,"\scriptstyle\stackrel{\chi}{\cong}"description]
& A\otimes M\ar[d,"\mu"]
& I\otimes M\ar[r,"j\otimes1"]\ar[dr,"\sim"'] & A\otimes M\ar[d,"\mu","\stackrel{\nu}{\cong}\quad"'near start] \\
A\otimes M\ar[r,"\mu"'] & A && A  
\end{tikzcd}
\end{equation}
satisfying coherence conditions.
\end{defi}

\begin{examples}\label{ex:Catpseudo}
As a fundamental example of a (cartesian) monoidal 2-category, consider $\Cat$ equipped with the 2-functor
$\times:\B{Cat}\times\B{Cat}\to\B{Cat}$
and $\ca{I}$ the unit category. It is a standard fact that a pseudomonoid therein is precisely a monoidal category $(\ca{V},\otimes,I,a,\ell,r)$.
Moreover, an action of a monoidal category $\ca{V}$ on an ordinary category $\ca{A}$ as defined in \cref{def:action}
is precisely a $\ca{V}$-pseudoaction inside $(\Cat,\times,\ca{I})$, exhibiting $\ca{A}$ as a $\ca{V}$-pseudomodule. 
\end{examples}

\subsection{Fibrations and adjunctions}
We now briefly recall some basic concepts and constructions from the theory of fibred categories.
Relevant references for what follows are \cite{Handbook2,FibredAdjunctions}.

Consider a functor $P:\ca{A}\to\caa{X}$. 
A morphism $\phi:A\to B$ in $\ca{A}$ over 
a morphism $f=P(\phi):X\to Y$ in $\caa{X}$
is called \emph{cartesian} if and only if, for all 
$g:X'\to X$ in $\caa{X}$ and $\theta:A'\to B$ in 
$\ca{A}$ with 
$P\theta=f\circ g$, there exists a unique arrow $\psi:A'\to A$ 
such that $P\psi=g$ and $\theta=\phi\circ\psi$:
\begin{displaymath}
\xymatrix @R=.1in @C=.6in
{A'\ar [drr]^-{\theta}\ar @{-->}[dr]_-{\exists!\psi} 
\ar @{.>}@/_/[dd] &&& \\
& A\ar[r]_-{\phi} \ar @{.>}@/_/[dd] & 
B \ar @{.>}@/_/[dd] & \textrm{in }\ca{A}\\
X'\ar [drr]^-{f\circ g=P\theta}\ar[dr]_-g &&&\\
& X\ar[r]_-{f=P\phi} & Y & \textrm{in }\caa{X}}\label{eq:13}
\end{displaymath}
For $X\in\ob\caa{X}$, the \emph{fibre
of $P$ over $X$} written $\ca{A}_X$, 
is the subcategory of $\ca{A}$ 
which consists of objects $A$
such that $P(A)=X$ and morphisms $\phi$ with 
$P(\phi)=1_X$, called
\emph{vertical} morphisms. 
The functor $P:\ca{A}\to\caa{X}$ 
is called a \emph{fibration} if and only if, for all $f:X\to Y$ in
$\caa{X}$ and $B\in\ca{A}_Y$, there is a cartesian morphism
$\phi$ with codomain $B$ above $f$; it is called a \emph{cartesian lifting} of $B$ along
$f$. The category $\caa{X}$ is then called the \emph{base} of the fibration,
and $\ca{A}$ its \emph{total} category.

Dually, the functor $U:\ca{C}\to\caa{X}$ is an \emph{opfibration} 
if $U^\mathrm{op}$ is a fibration, \emph{i.e.} for every $C\in\ca{C}_X$
and $g:X\to Y$ in $\caa{X}$, there is a cocartesian morphism
with domain $C$ above $g$, the \emph{cocartesian lifting}
of $C$ along $g$.

If $P:\ca{A}\to\caa{X}$ is a fibration, assuming the 
axiom of choice we may select a 
cartesian arrow over each $f:X\to Y$ in $\caa{X}$
and $B\in\ca{A}_Y$, denoted by 
$\Cart(f,B):f^*(B)\to B$.
Such a choice of cartesian liftings is called a 
\emph{cleavage} for $P$, which is then
called a \emph{cloven} fibration; any fibration is henceforth assumed to be cloven.
Dually, if $U$ is an opfibration, for any $C\in\ca{C}_X$
and $g:X\to Y$ in $\caa{X}$ we can choose a cocartesian
lifting of $C$ along $g$,
$\Cocart(g,C):C\longrightarrow g_!(C)$.
The choice of (co)cartesian liftings in an (op)fibration induces
a so-called \emph{reindexing functor} between the fibre categories
\begin{displaymath}
f^*:\ca{A}_Y\to\ca{A}_X\quad\textrm{ and }\quad g_!\colon\ca{C}_X\to\ca{C}_Y
\end{displaymath}
respectively, for each morphism $f:X\to Y$ and $g\colon X\to Y$ in the base category,
mapping each object to the (co)domain of its lifting.


An \emph{oplax morphism of fibrations} (or oplax fibred 1-cell) $(S,F)$ between $P:\ca{A}\to\caa{X}$ and $Q:\ca{B}\to\caa{Y}$ is
given by a commutative square of categories and functors
\begin{equation}\label{eq:fibred1cell}
\xymatrix @C=.4in @R=.4in
{\ca{A}\ar[r]^-S \ar[d]_-P &
\ca{B}\ar[d]^-Q \\
\caa{X}\ar[r]_-F &
\caa{Y}}
\end{equation}
as in \cite[Def.~3.5]{Framedbicats}. 
If moreover $S$ preserves cartesian arrows, meaning that if $\phi$ is $P$-cartesian then $S\phi$ is $Q$-cartesian,
the pair $(S,F)$ is called a \emph{fibred 1-cell} or \emph{strong morphism of fibrations}.
Dually, we have the notion of an \emph{lax morphism of opfibrations} $(K,F)$,
and \emph{opfibred 1-cell} when $K$ is cocartesian. 
Notice that any oplax fibred 1-cell $(S,F)$ determines a collection of functors between the fibres
$S_X\colon\ca{A}_X\to\ca{B}_{FX}$
as the restriction of $S$ to the corresponding subcategories.

A \emph{fibred 2-cell} between oplax fibred 1-cells $(S,F)$ and $(T,G)$ is a pair of natural transformations 
($\alpha:S\Rightarrow T,\beta:F\Rightarrow G$) with $\alpha$ above $\beta$, \emph{i.e.} $Q(\alpha_A)
=\beta_{PA}$ for all $A\in\ca{A}$, displayed
\begin{equation}\label{eq:fibred2cell}
\xymatrix @C=.8in @R=.5in
{\ca{A}\rtwocell^S_T{\alpha}\ar[d]_-P
& \ca{B}\ar[d]^-Q \\
\caa{X}\rtwocell^F_G{\beta} & \caa{Y}.}
\end{equation}
Notice that if the 1-cells are strong, the definition of a 2-cell between them remains the same.
Dually, we have the notion of an \emph{opfibred 2-cell} 
between (lax) opfibred 1-cells.

We obtain 2-categories $\B{Fib}_\opl$ and $\B{Fib}$ of fibrations over arbitrary base categories, (oplax) fibred 1-cells and
fibred 2-cells. Evidently, these are both subcategories of $\Cat^\B{2}$.
$\Fib_\opl$ is a full sub-2-category of those objects which are fibrations,
and $\Fib$ is the non-full sub-2-category whose morphism are commutative squares
where the top functor is cartesian. Dually, $\OpFib\subset\OpFib_\lax\subset_{\textrm{full}}\Cat^\B{2}$.
These 2-categories are monoidal, inheriting the tensor product from $\B{Cat}^\B{2}$:
the cartesian product of two fibrations is still a fibration. The unit is $1_\ca{I}\colon\ca{I}\to\ca{I}$, the identity
on the terminal category.

Notice that the terminology for oplax morphisms of fibrations and lax morphisms of opfibrations is justified by
a relaxed version of the fundamental equivalence between fibrations and pseudofunctors (Grothendieck construction).
For more details, see \cite[Prop.~3.6]{Framedbicats}.

We now turn to notions of adjunctions between fibrations, internally to any of the above 2-categories
of (op)fibrations.

\begin{defi}\label{generalfibredadjunction}
Given fibrations $P:\ca{A}\to\caa{X}$ and $Q:\ca{B}\to\caa{Y}$,
a \emph{general (oplax) fibred adjunction} $(L,F)\dashv(R,G)$
is given by a pair of (oplax) fibred 1-cells $(L,F):P\to Q$ and 
$(R,G):Q\to P$ together with fibred 2-cells $(\zeta,\eta):(1_\ca{A},1_\caa{X})\Rightarrow
(RL,GF)$ and  $(\xi,\varepsilon):(LR,FG)\Rightarrow(1_\ca{B},1_\caa{Y})$
such that $L\dashv R$ via $\zeta,\xi$ and $F\dashv G$ via $\eta,\varepsilon$. This is displayed as
\begin{displaymath}
\xymatrix @C=.7in @R=.4in
{\ca{A} \ar[d]_-P 
\ar @<+.8ex>[r]^-L\ar@{}[r]|-\bot
& \ca{B} \ar @<+.8ex>[l]^-{R} \ar[d]^-Q \\
\caa{X} \ar @<+.8ex>[r]^-F\ar@{}[r]|-\bot
& \caa{Y} \ar @<+.8ex>[l]^-G}
\end{displaymath}
\end{defi}

Notice that by definition, $\zeta$ is above $\eta$ and $\xi$ is above $\varepsilon$,
hence $(P,Q)$ is an ordinary map between adjunctions. Dually, we have the notion of \emph{general (lax) opfibred 
adjunction} in $\B{OpFib}_{(\lax)}$.

The following result establishes certain (co)cartesian properties of adjoints.

\begin{lem}\cite[4.5]{Winskel}\label{Winskellemma}
Right adjoints in the 2-category $\Cat^\B{2}$ preserve cartesian morphisms and dually left adjoints preserve cocartesian morphisms.
\end{lem}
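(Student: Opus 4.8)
The plan is to work directly from the elementary universal property of cartesian morphisms, transporting the lifting problem across the two hom-set bijections that an adjunction in $\Cat^\B{2}$ encodes. Write the adjunction as $(L,F)\dashv(R,G)$ between $P\colon\ca{A}\to\caa{X}$ and $Q\colon\ca{B}\to\caa{Y}$ as in \cref{generalfibredadjunction}, so that $P\circ R=G\circ Q$ and $Q\circ L=F\circ P$ on the nose, with units $\zeta,\eta$ and counits $\xi,\varepsilon$. Let $\phi\colon B'\to B$ be $Q$-cartesian over $g\colon Y'\to Y$. Since $P(R\phi)=G(Q\phi)=Gg$, the arrow $R\phi\colon RB'\to RB$ sits over $Gg$, and I must verify its cartesian universal property.

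First I would fix a test datum: an object $A''\in\ca{A}$, a morphism $k\colon PA''\to GY'$ in $\caa{X}$, and a morphism $\psi\colon A''\to RB$ in $\ca{A}$ with $P\psi=Gg\circ k$; the goal is a unique $\theta\colon A''\to RB'$ with $P\theta=k$ and $R\phi\circ\theta=\psi$. The strategy is to transpose this problem into $\ca{B}$. Using $L\dashv R$ take the mate $\hat\psi=\xi_B\circ L\psi\colon LA''\to B$, and using $F\dashv G$ take the mate $\hat k=\varepsilon_{Y'}\circ Fk\colon FPA''\to Y'$, recalling that $FPA''=QLA''$.

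The crux, and the step I expect to be the main obstacle, is establishing that transposition intertwines the two projections, so that the transposed datum satisfies the hypotheses of the cartesian property of $\phi$. Concretely I would prove $Q\hat\psi=\widehat{P\psi}$, the $F\dashv G$-mate of $P\psi$: expanding $Q\hat\psi=Q\xi_B\circ QL\psi$, the fibred-$2$-cell condition that $\xi$ lies above $\varepsilon$ gives $Q\xi_B=\varepsilon_{QB}$, while $QL=FP$ gives $QL\psi=F(P\psi)$, whence $Q\hat\psi=\varepsilon_{QB}\circ F(P\psi)=\widehat{P\psi}$. Substituting $P\psi=Gg\circ k$ and using naturality of $\varepsilon$ collapses this to $Q\hat\psi=g\circ\hat k$. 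Thus $(\hat\psi,\hat k)$ is exactly a cartesian test datum for $\phi$ at the object $LA''$.

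Finally I would invoke $Q$-cartesianness of $\phi$ to obtain a unique $\hat\theta\colon LA''\to B'$ with $Q\hat\theta=\hat k$ and $\phi\circ\hat\theta=\hat\psi$, and transpose it back to $\theta\colon A''\to RB'$ under $L\dashv R$. Naturality of $\xi$ shows the mate of $R\phi\circ\theta$ is $\phi\circ\hat\theta=\hat\psi$, so $R\phi\circ\theta=\psi$ by injectivity of transposition; and the same intertwining identity, read for the inverse bijection, turns $Q\hat\theta=\hat k$ into $P\theta=k$. Uniqueness is inherited from that of $\hat\theta$ together with bijectivity of the mate correspondence. The dual claim, that left adjoints preserve cocartesian morphisms, then follows by dualizing every functor and $2$-cell, which interchanges the fibred and opfibred roles.
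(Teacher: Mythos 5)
Your argument is correct. The paper itself gives no proof of this lemma --- it is quoted from Winskel \cite[4.5]{Winskel} --- but your mate-transposition argument is precisely the standard one: the key identity $Q\hat\psi=\widehat{P\psi}$, which you derive from $Q\xi_B=\varepsilon_{QB}$ (the ``lying above'' condition on the counit 2-cell of an adjunction in $\Cat^\B{2}$) together with strict commutativity $QL=FP$, is exactly what lets the cartesian lifting problem be transported across the adjunction and back, and your uniqueness and dualization steps are handled properly.
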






Finally, in \cite[\S 3.2]{Measuringcomodule}, 
conditions under which a fibred 1-cell has an adjoint are investigated in detail, and that proves very useful
in determining enrichment relations in conjunction with \cref{actionenrich}.
Here we recall a main result providing a general lax opfibred adjunction, with regards to the applications
of \cref{sec:applications}.

\begin{thm}\label{totaladjointthm}
Suppose $(K,F):U\to V$ is an opfibred 1-cell and $F\dashv G$ is an adjunction with counit $\varepsilon$ between 
the bases of the opfibrations, as in
\begin{displaymath}
\xymatrix @C=.6in
{\ca{C}\ar[r]^-K\ar[d]_-U & \ca{D}\ar[d]^-V \\
\caa{X}\ar @<+.8ex>[r]^-F
\ar@{}[r]|-\bot
& \caa{Y}. \ar @<+.8ex>[l]^-G}
\end{displaymath}
If, for each $Y\in\caa{Y}$, the composite functor
$\ca{C}_{GY}\xrightarrow{K_{GY}}\ca{D}_{FGY}\xrightarrow{(\varepsilon_Y)_!}\ca{D}_Y$ between the fibres
has a right adjoint for each $Y\in\caa{Y}$, then $K$ has a right adjoint $R$ between the total categories
and $(K,F)\dashv(R,G)$ is an general oplax adjunction. 
\end{thm}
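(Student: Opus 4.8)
The plan is to build the right adjoint $R$ fibrewise and then glue the pieces together using the opfibration structure together with the base adjunction $F\dashv G$, whose unit I will write $\eta$. For each $Y\in\caa{Y}$, write $\Phi_Y:=(\varepsilon_Y)_!\circ K_{GY}\colon\ca{C}_{GY}\to\ca{D}_Y$ for the composite appearing in the hypothesis, and let $\Phi_Y\dashv\Psi_Y$ with $\Psi_Y\colon\ca{D}_Y\to\ca{C}_{GY}$ the assumed fibrewise right adjoint. On objects I would define $R(D):=\Psi_{VD}(D)$, which lies in $\ca{C}_{G(VD)}$; in particular $UR(D)=G(VD)$, so the square $UR=GV$ already holds on objects and $(R,G)$ will be a (lax) opfibred $1$-cell by construction.

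To obtain $R$ on morphisms and the adjunction $K\dashv R$ at the same time, I would establish a bijection $\ca{D}(KC,D)\cong\ca{C}(C,RD)$, natural in $C\in\ca{C}_X$ and $D\in\ca{D}_Y$. First I decompose a map $f\colon KC\to D$ according to its image $Vf=g\colon FX\to Y$: the cocartesian property of $\Cocart(g,KC)$ gives $\{f\mid Vf=g\}\cong\ca{D}_Y(g_!KC,D)$. Writing $\widehat g=Gg\circ\eta_X\colon X\to GY$ for the transpose of $g$, so that $g=\varepsilon_Y\circ F\widehat g$, I use that $(K,F)$ preserves cocartesian liftings: $K$ sends $\Cocart(\widehat g,C)$ to a cocartesian lift of $C$ along $F\widehat g$, which together with pseudofunctoriality of reindexing yields a coherent isomorphism $g_!KC\cong(\varepsilon_Y)_!K(\widehat g_!C)=\Phi_Y(\widehat g_!C)$. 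The fibrewise adjunction then gives $\ca{D}_Y(\Phi_Y(\widehat g_!C),D)\cong\ca{C}_{GY}(\widehat g_!C,\Psi_Y D)=\ca{C}_{GY}(\widehat g_!C,RD)$, and a final use of the cocartesian property of $\Cocart(\widehat g,C)$ identifies the latter with $\{h\colon C\to RD\mid Uh=\widehat g\}$. Taking the coproduct over all $g\colon FX\to Y$, matched with the coproduct over all $\widehat g\colon X\to GY$ via $F\dashv G$, assembles these into the desired $\ca{D}(KC,D)\cong\ca{C}(C,RD)$.

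This bijection makes $R$ into a functor right adjoint to $K$. By construction it lies over the base transposition $\caa{Y}(FX,Y)\cong\caa{X}(X,GY)$, so the transpose of a map over $g$ is a map over $\widehat g$; hence $UR=GV$ on morphisms as well, and tracking identities shows the unit $C\to RKC$ sits above $\eta_X$ while the counit $KRD\to D$ sits above $\varepsilon_Y$. Thus $(U,V)$ is a map of adjunctions, the unit and counit are fibred $2$-cells above those of $F\dashv G$, and $(K,F)\dashv(R,G)$ is the asserted general (lax) opfibred adjunction.

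The main obstacle is not the pointwise bijection but its \emph{naturality}, together with the coherence bookkeeping hidden in the middle isomorphism $g_!KC\cong\Phi_Y(\widehat g_!C)$: one must verify that the pseudofunctoriality constraints of reindexing, the comparison cells witnessing that $K$ preserves the chosen cocartesian liftings, and the transpose operation of $F\dashv G$ all interact compatibly as $g$ and $C$ vary, so that the assembled correspondence is natural in both variables and the resulting $R$ is genuinely functorial rather than merely an object assignment.
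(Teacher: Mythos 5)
Your proposal is correct and follows essentially the same route as the proof this paper defers to (\cite[\S 3.2]{Measuringcomodule}): define $R$ fibrewise by $RD=\Psi_{VD}(D)$, decompose $\ca{D}(KC,D)$ into hom-sets of the fibre $\ca{D}_Y$ indexed over $g\colon FX\to Y$, rewrite $g_!KC\cong(\varepsilon_Y)_!K(\widehat g_!C)$ using cocartesianness of $K$ and the factorization $g=\varepsilon_Y\circ F\widehat g$, and match against the analogous decomposition of $\ca{C}(C,RD)$ via the transposition $\caa{Y}(FX,Y)\cong\caa{X}(X,GY)$. Your closing remark correctly identifies where the remaining work lies (naturality and coherence of the comparison isomorphisms), and your observation that the unit and counit sit above $\eta$ and $\varepsilon$ is exactly what makes the result a general lax opfibred adjunction rather than merely an adjunction between total categories.
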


\section{Enriched fibrations}

This section's goal is to introduce a notion of an enriched fibration. It will do so in a way
that an adjusted version of \cref{actionenrich}, instead of providing an enrichment of an ordinary category
in a monoidal category, will give
an enrichment of an ordinary fibration in a \emph{monoidal} one.
The key idea is to shift all necessary structure (\cref{ex:Catpseudo}) from the context
of categories to fibrations,
moving from $(\B{Cat},\times,\ca{I})$ to the monoidal 2-category $(\B{Fib},\times,1_{\ca{I}})$.

First of all, a pseudomonoid (\cref{def:pseudomon}) in the 2-category of fibrations,
which will serve as the base of the enrichment, is a fibration
$T:\ca{V}\to\caa{W}$ equipped with a multiplication $m:T\times T\to T$
and unit $j:1_\ca{I}\to T$, along with 2-isomorphisms $a,\ell,r$ as in \cref{pseudomon2cells}.
More explicitly, the multiplication and unit are fibred 1-cells $m=(\otimes_\ca{V},\otimes_\caa{W})$ and $j=(I_\ca{V},I_\caa{W})$
\cref{eq:fibred1cell}, displayed as
\begin{equation}\label{multunitmonoidalfibr}
 \xymatrix @C=.6in
{\ca{V}\times\ca{V}\ar[r]^-{\otimes_\ca{V}}
\ar[d]_-{T\times T} & \ca{V}\ar[d]^-T \\
\caa{W}\times\caa{W}\ar[r]_-{\otimes_{\caa{W}}} & 
\caa{W}}\qquad\mathrm{and}\qquad
\xymatrix @C=.6in 
{\ca{I}\ar[r]^-{I_\ca{V}} \ar[d]_-{1} & 
\ca{V}\ar[d]^-{T} \\
\ca{I}\ar[r]_-{I_{\caa{W}}} & \caa{W}}
\end{equation}
where $\otimes_\ca{V}$ and $I_\ca{V}$ are cartesian, and invertible fibred 2-cells
$a=(a^\ca{V},a^\caa{W})$, $r=(r^\ca{V},r^\caa{W})$, $\ell=(\ell^\ca{V},\ell^\caa{W})$ \cref{eq:fibred2cell}, displayed as
\begin{displaymath}
 \xymatrix @C=1.2in @R=.6in
{\ca{V}\times\ca{V}\times\ca{V}\rtwocell^{\otimes(\otimes\times1)}
_{\otimes(1\times\otimes)}{\;a^\ca{V}}
\ar[d]_-{T\times T\times T} & \ca{V}\ar[d]^-T \\
\caa{W}\times\caa{W}\times\caa{W}
\rtwocell^{\otimes(\otimes\times1)}
_{\otimes(1\times \otimes)}{\;a^\caa{W}} & \caa{W}}
\end{displaymath}
\begin{displaymath}
  \xymatrix @C=.9in @R=.4in
{\ca{V}\times1\rtwocell^{\otimes(1\times I)}
_{\sim}{\;\;r^\ca{V}}
\ar[d]_-{T\times1} & \ca{V}\ar[d]^-T \\
\caa{W}\times1\rtwocell^{\otimes(1\times I)}
_{\sim}{\;\;r^\caa{W}} & \caa{W}}\qquad
\xymatrix @C=.8in @R=.4in
{1\times\ca{V}\rtwocell^{\otimes(I\times1)}
_{\sim}{\;\ell^\ca{V}}
\ar[d]_-{1\times T} & \ca{V}\ar[d]^-T \\
1\times\caa{W}\rtwocell^{\otimes(I\times1)}
_{\sim}{\;\;\ell^\caa{W}} & \caa{W}}
\end{displaymath}
where by definition $a^\ca{V},r^\ca{V},l^\ca{V}$ lie 
above $a^\caa{W},r^\caa{W},l^\caa{W}$.
The coherence axioms they satisfy turn out to give the usual axioms which make 
$(\ca{V},\otimes_\ca{V},I_\ca{V})$ and $(\caa{W},\otimes_\caa{W},I_\caa{W})$ 
into monoidal categories with the respective associativity, left and right unit constraints.

\begin{rmk}
The latter can be deduced also by the fact that the domain and codomain 2-functors
$dom,\;cod:\B{Fib}\subset\Cat^\B{2}\to\B{Cat}$ are in fact strict monoidal,
i.e. preserve the cartesian structure on the nose.
In other words, the equality of pasted diagrams
of 2-cells in $\B{Fib}$ breaks down into
equalities $\Cat$ for the two (ordinary) natural transformations they consist of.
\end{rmk}
Moreover, the strict commutativity of the diagrams (\ref{multunitmonoidalfibr})
implies that $T$ strictly preserves the tensor product and the unit object
between $\ca{V}$ and $\caa{W}$, \emph{i.e.}
\begin{displaymath}
 TA\otimes_\caa{W} TB=T(A\otimes_\ca{V} B),\quad I_\caa{W}=T(I_\ca{V}).
\end{displaymath}
Along with the conditions that $T(a^\ca{V})=a^\caa{W}$, 
$T(l^\ca{V})=l^\caa{W}$ and $T(r^\ca{V})=r^\caa{W}$, 
these data define a strict monoidal structure on $T$;
we obtain the following definition, which coincides
with \cite[12.1]{Framedbicats}.

\begin{defi}\label{def:monoidalfibration}
A \emph{monoidal fibration}
is a fibration $T:\ca{V}\to\caa{W}$ such that
\begin{enumerate}[(i)]
\item $\ca{V}$ and $\caa{W}$ are monoidal categories,
\item $T$ is a strict monoidal functor,
\item the tensor product $\otimes_\ca{V}$ of $\ca{V}$
preserves cartesian arrows.
\end{enumerate}
\end{defi}

If $\ca{V}$ and $\ca{W}$ are 
symmetric monoidal categories and $T$ is a symmetric strict monoidal
functor, we call $T$ a \emph{symmetric monoidal fibration}.
In a dual way, we can define a (\emph{symmetric})
\emph{monoidal opfibration} to be an opfibration
which is a (symmetric) strict monoidal functor, where the tensor
product of the total category preserves cocartesian arrows. Notice that a monoidal
opfibration is still a pseudomonoid (and not a pseudocomonoid), this time in $\OpFib$.
Finally, a \emph{monoidal bifibration} is one where the tensor product of the total category
preserves both cartesian and cocartesian liftings.

We now describe a pseudomodule for a pseudomonoid in $(\B{Fib},\times,1_\ca{I})$; in analogy to
\cref{actionenrich}, this will be the object which will eventually have the enriched structure. According to
\cref{def:pseudomod}, a \emph{pseudoaction} of a monoidal
fibration $T:\ca{V}\to\caa{W}$ on an ordinary fibration $P:\ca{A}\to\caa{X}$ is a fibred 1-cell
$\mu=(\mu^\ca{A},\mu^\caa{X}):T\times P\to P$
\begin{equation}\label{pseudoaction}
\xymatrix @C=.6in
{\ca{V}\times\ca{A}\ar[r]^-{\mu^\ca{A}}
\ar[d]_-{T\times P} & \ca{A}\ar[d]^-P \\
\caa{W}\times\caa{X}\ar[r]_-{\mu^{\caa{X}}} & 
\caa{X}}
\end{equation}
where $\mu^\ca{A}$ is cartesian, along with 2-isomorphisms $\chi,\nu$ as in \cref{pseudomod2cells}
in $\B{Fib}$. Explicitly, these are invertible
fibred 2-cells $\chi=(\chi^\ca{A},\chi^\caa{X})$,
$\nu=(\nu^\ca{A},\nu^\caa{X})$ represented by
\begin{displaymath}
\xymatrix @C=.4in @R=.003in
{& \ca{V}\times\ca{A}\ar@/^/[dr]^{\mu} & \\
\ca{V}\times\ca{V}\times\ca{A}\ar@/^/[ur]^-{M\times1}
\ar@/_/[dr]_-{1\times\mu}
\rrtwocell<\omit>{\;\;\chi^\ca{A}}
\ar[ddddd]_-{T\times T\times P} && \ca{A}\ar[ddddd]^-P \\
& \ca{V}\times\ca{A} \ar@/_/[ur]_-{\mu} & \\
\hole \\
\hole \\
& \caa{W}\times\caa{X} \ar@/^/[dr]^{\mu} & \\
\caa{W}\times\caa{W}\times\caa{X}
\ar@/^/[ur]^-{M\times1} \ar@/_/[dr]_-{1\times\mu}
\rrtwocell<\omit>{\;\;\chi^\caa{X}} && \caa{X}\\
& \caa{W}\times\caa{X} \ar@/_/[ur]_-{\mu} &}
\qquad
\xymatrix @C=.4in @R=.003in
{& \ca{V}\times\ca{A}\ar@/^/[dr]^{\mu} & \\
1\times\ca{A}\ar@/^/[ur]^-{I\times1}
\ar@/_3ex/[rr]_-{\sim}
\rrtwocell<\omit>{\;\;\nu^\ca{A}}
\ar[ddddd]_-{1\times P} && \ca{A}\ar[ddddd]^-P \\
\hole \\
\hole \\
\hole \\
& \caa{W}\times\caa{X} \ar@/^/[dr]^{\mu} & \\
1\times\caa{X}\ar@/^/[ur]^-{I\times1} 
\ar@/_3ex/[rr]_-{\sim}
\rrtwocell<\omit>{\;\;\nu^\caa{X}} && \caa{X}}
\end{displaymath}
where $\chi^\ca{A},\nu^\ca{A}$ are above 
$\chi^\caa{X},\nu^\caa{X}$ with respect to the 
appropriate fibrations. These data are subject to 
certain axioms, which in fact again
split up in two sets of commutative diagrams
for the the two natural isomorphisms that $\chi$ and $\nu$ consist of; these coincide with the action 
of a monoidal category axioms (\cref{def:action}).

\begin{defi}\label{def:Trepresentation}
A \emph{$T$-representation} for a monoidal fibration $T\colon\ca{V}\to\caa{W}$
is a fibration $P\colon\ca{A}\to\caa{X}$ equipped with a $T$-pseudoaction $\mu=(\mu^\ca{A},\mu^\caa{X})$. 
This amounts to two actions 
\begin{gather*}
 \mu^\ca{A}=*:\ca{V}\times\ca{A}\longrightarrow\ca{A}\nonumber \\
\mu^\caa{X}=\diamond:\caa{W}\times\caa{X}\longrightarrow\caa{X}
\end{gather*}
of the monoidal categories $\ca{V}$, $\caa{W}$ on the 
categories $\ca{A}$ and $\caa{X}$ respectively, satisfying the commutativity of (\ref{pseudoaction})
where $\mu^\ca{A}$ preserves cartesian arrows, such that for all $X,Y\in\ca{V}$ and $A\in\ca{A}$ the following conditions hold:
\begin{equation}\label{twoactionscompatible}
P\chi^\ca{A}_{XYA}=\chi^\caa{X}_{(TX)(TY)(PA)},\quad P\nu^\ca{A}_A=\nu^\caa{X}_{PA}.
\end{equation}
\end{defi}

The compatibility conditions of the above definition are natural, since by (\ref{pseudoaction})
\begin{displaymath}
 P(X*A)=TX\diamond PA
\end{displaymath}
for any $X{\in}\ca{V}$, $A{\in}\ca{A}$, hence 
the isomorphisms
$\chi^\ca{A}_{XYA}:X*(Y*A)\cong(X\otimes_\ca{V} Y)*A$
in $\ca{A}$ lie above certain isomorphisms
\begin{equation}\label{eq:iso1}
 P\chi^\ca{A}_{XYA}:TX\diamond(TY\diamond PA)\xrightarrow{\;\sim\;}
(TX\otimes_\caa{W} TY)\diamond PA
\end{equation}
in $\caa{X}$, due to the strict monoidality of $T$.
Similarly, $\nu^\ca{A}_A:I*A\cong A$ is mapped to 
\begin{equation}\label{eq:iso2}
 P\nu^\ca{A}_A:I_\caa{X}\diamond PA\xrightarrow{\;\sim\;}PA
\end{equation}
since $P(I_\ca{V}*A)=T(I_\ca{V})\diamond PA=I_\caa{W}\diamond PA$. Thus \cref{twoactionscompatible}
demand that \cref{eq:iso1,eq:iso2} coincide with the components of 
$\chi^{\caa{X}}$ and $\nu^{\caa{X}}$, from the $\caa{W}$-representation
$\caa{X}$.

The last step in modifying \cref{actionenrich} to obtain a correspondence between representations
of a monoidal fibration and the desired enriched fibrations, is to introduce a notion of a parameterized adjunction in $\B{Fib}$.
For that, we first re-formulate the `adjunctions with a parameter' \cref{thm:parameterCat} in the context of $\B{Cat}^\B{2}$.

\begin{thm}\label{thm:parameterCat2}
 Suppose we have a morphism $(F,G)$ of two variables
in $\B{Cat}^\B{2}$, given by a commutative square
of categories and functors
\begin{equation}\label{FG}
 \xymatrix @C=.6in @R=.4in
{\ca{A}\times\ca{B}\ar[r]^-F
\ar[d]_-{H\times J} & \ca{C}\ar[d]^-K \\
\caa{X}\times\caa{Y}\ar[r]_-G & \caa{Z}.}
\end{equation}
Assume that, for every $B\in\ca{B}$ and $Y\in\caa{Y}$, 
there exist adjunctions $F(-,B)\dashv R(B,-)$ and
$G(-,Y)\dashv S(Y,-),$ such that $(F(-,B),G(-,JB))$ has a 
right adjoint $(R(B,-),S(JB,-))$ in $\B{Cat}^\B{2}$. This is represented by
\begin{equation}\label{adjunction[2,Cat]}
 \xymatrix @C=.9in @R=.5in
{\ca{A}\ar[d]_-H\ar@<+.8ex>[r]^-{F(-,B)}
\ar@{}[r]|-{\bot} &
\ca{C}\ar[d]^-K\ar@<+.8ex>[l]^-{R(B,-)} \\
\caa{X}\ar@<+.8ex>[r]^-{G(-,JB)}\ar@{}[r]|-{\bot} & 
\caa{Z}\ar@<+.8ex>[l]^-{S(JB,-)}}
\end{equation}
where $(H,K)$ is a map of adjunctions (both squares commute and $\varepsilon_K=K\varepsilon$, $H\eta=\eta_H$).
Then, there is a unique way to define a morphism
of two variables 
\begin{equation}\label{defRS}
 \xymatrix @C=.6in @R=.4in
{\ca{B}^\op\times\ca{C}\ar[r]^-R
\ar[d]_-{J^\op\times K} & \ca{A}\ar[d]^-H \\
\caa{Y}^\op\times\caa{Z}\ar[r]_-S & \caa{X}}
\end{equation}
in $\B{Cat}^2$,
for which $\ca{C}(F(A,B),C)\cong\ca{A}(A,R(B,C)), \caa{Z}(G(X,Y),Z)\cong\caa{X}(X,S(Y,Z))$
are natural in all three variables.
\end{thm}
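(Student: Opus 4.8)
The plan is to lift the ordinary parameterized adjunction theorem (\cref{thm:parameterCat}) to the 2-category $\B{Cat}^\B{2}$ by applying it twice—once in the total layer and once in the base layer—and then to check that the resulting two functorialities are compatible over the vertical maps $J$ and $K$, so that the pair $(R,S)$ assembles into a genuine morphism of two variables in $\B{Cat}^\B{2}$. Concretely, I would first forget the $\B{Cat}^\B{2}$-structure and apply \cref{thm:parameterCat} separately to $F\colon\ca{A}\times\ca{B}\to\ca{C}$ and to $G\colon\caa{X}\times\caa{Y}\to\caa{Z}$. Since we are given $F(-,B)\dashv R(B,-)$ for each $B$ and $G(-,Y)\dashv S(Y,-)$ for each $Y$, the theorem yields unique extensions of $R$ to a functor $\ca{B}^\op\times\ca{C}\to\ca{A}$ and of $S$ to a functor $\caa{Y}^\op\times\caa{Z}\to\caa{X}$, each with the isomorphism natural in all variables. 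This fixes the two functors in \cref{defRS} individually; what remains is the commutativity of that square, i.e.\ that $H\circ R = S\circ(J^\op\times K)$.

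Next I would establish this commutativity. On objects it is immediate: for fixed $B$ the square \cref{adjunction[2,Cat]} gives $H\circ R(B,-)=S(JB,-)\circ K$ as a consequence of $(H,K)$ being a map of adjunctions, because a map of adjunctions transports right adjoints to right adjoints compatibly. The content is to verify this on the full bifunctors, i.e.\ in the $\ca{B}^\op$ (resp.\ $\caa{Y}^\op$) direction as well. Here the cleanest route is to use the \emph{uniqueness} clause of \cref{thm:parameterCat}: the action of $R$ on a morphism $B'\to B$ is the unique map making the naturality square for the adjunction isomorphism commute, and likewise for $S$. I would therefore show that $H\circ R$ and $S\circ(J^\op\times K)$ both satisfy the defining universal property of the parameterized adjoint $S(J(-),K(-))$ relative to the bifunctor $G\circ(H\times J)=K\circ F$ (which holds by the commutativity of the original square \cref{FG}), and conclude they agree by uniqueness. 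Equivalently, I would exhibit the two-variable naturality isomorphism $\caa{Z}(G(X,Y),Z)\cong\caa{X}(X,S(Y,Z))$ as the image under $K$ (and its adjoint correspondences) of $\ca{C}(F(A,B),C)\cong\ca{A}(A,R(B,C))$, using that $H,K$ intertwine the units and counits; this simultaneously gives compatibility and the desired naturality in the third variable.

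The main obstacle I anticipate is bookkeeping the naturality of the isomorphisms \emph{simultaneously} in the total and base layers while respecting the over-structure. The two applications of \cref{thm:parameterCat} guarantee naturality of each hom-set isomorphism separately, but the assertion that $(R,S)$ is a single morphism in $\B{Cat}^\B{2}$ requires that the upper isomorphism lie \emph{strictly above} the lower one under $H$ and $K$—that is, applying $H$ to a transpose in $\ca{A}$/$\ca{C}$ yields exactly the corresponding transpose in $\caa{X}$/$\caa{Z}$. This is where the hypotheses $\varepsilon_K=K\varepsilon$ and $H\eta=\eta_H$ (that $(H,K)$ is a map of adjunctions) are essential: they ensure that transposition commutes with the vertical functors, so that the square \cref{defRS} commutes not just on the nose for objects but coherently for the transposed morphisms defining $R$ and $S$. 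Once this intertwining of units and counits is spelled out, the commutativity of \cref{defRS} and the three-variable naturality follow formally, and the uniqueness of $(R,S)$ is inherited from the uniqueness in \cref{thm:parameterCat} applied in each layer.
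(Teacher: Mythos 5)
Your proposal is correct and follows essentially the same route as the paper's (much terser) proof: apply the ordinary parameterized adjunction theorem separately in the total and base layers, get commutativity of \cref{defRS} on objects from the hypothesis that $(R(B,-),S(JB,-))$ is a $\B{Cat}^\B{2}$-morphism, and deduce commutativity on arrows in the $\ca{B}^\op$-direction from the uniqueness of $R(h,1)$ and $S(Jh,1)$. Your explicit identification of where the unit/counit compatibilities $\varepsilon_K=K\varepsilon$, $H\eta=\eta_H$ enter (transposition commuting with $H$ and $K$) is exactly the detail the paper leaves implicit in the phrase ``under these assumptions.''
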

\begin{proof}
The result clearly follows from ordinary parameterized adjunctions. 
The fact that $(R(B,-),S(JB,-))$ is
an arrow in $\B{Cat}^\B{2}$ for all $B$'s ensures that the 
diagram (\ref{defRS}) commutes on the second
variable, and also on 
the first variable on objects, since 
$HR(B,C)=S(JB,KC)$. On arrows, commutativity follows
from the 
unique way of defining $R(h,1)$ and 
$S(Jh,1)$ for any $h:B\to B'$ under these assumptions.
\end{proof}

We call $(R,S)$ the \emph{parameterized adjoint} of $(F,G)$ in $\B{Cat}^\B{2}$, written $(F,G)\padj(R,S)$.

\begin{rmk}
Although the notion of an adjunction can be internalized in
any bicategory, its parameterized version seems to be much more involved.
In any monoidal bicategory with duals, we could ask 
for 1-cells $A\cong A\otimes I\xrightarrow{1\times b}A\otimes B\xrightarrow{t}C$
to have adjoints $g_b\colon C\to A$, for every $b\colon I\to B$.
For the cartesian 2-monoidal case at least, with `category-like' objects
like in $\Fib$, the 2-categorical approach of \cite[Thm. 2.4]{Multivariableadjunctions}
clarifies things.
\end{rmk}

Restricting to fibrations, 
consider a morphism of two variables in $\B{Fib}\subset\B{Cat}^2$ \emph{i.e.} a fibred 1-cell $(F,G)$ as in (\ref{FG}) with $F$ cartesian,
with the property that (\ref{adjunction[2,Cat]}) is a general fibred adjunction as in \cref{generalfibredadjunction},
\emph{i.e.} the partial right adjoint $R(B,-)$ is also cartesian. 
Dually, in $\B{OpFib}$ 
we request both $F$ and $R(B,-)$ to be cocartesian. Notice that in both cases,
the parameterized adjoint of two variables $(R,S)$ can neither be a fibred nor an opfibred 1-cell `wholly',
since at \cref{defRS} the vertical $J^\op\times K$ is a product of a fibration with an opfibration, hence neither of the two.

If we lift the (co)cartesian requirements, we end up with the (op)lax version of these adjunctions.
Since those cases are the most relevant to our examples, we abuse notation as to call (op)fibred
parameterized adjunctions the (op)lax ones. Based on the remark that follows, this abuse is in fact only fractional.

\begin{rmk}
 There exists an interesting asymmetry with respect to the (co)ca\-rte\-sian\-ness requirement
 of the left/right partial adjoints, due to \cref{Winskellemma}.
 Since right adjoints always preserve cartesian arrows in $\B{Cat}^2$
 and dually left adjoints always preserve cocartesian ones, we can deduce that any fibred 1-cell $(F,G)$
 has a (right) fibred parameterized adjoint as long as it has a $\Cat^\B{2}$-parameterized adjoint.
 Dually, an opfibred 1-cell has a (left) opfibred parameterized adjoint as long as it has it in $\Cat^\B{2}$.
\end{rmk}

\begin{defi}\label{def:fibredpadjoint}
Suppose $H$, $K$ are fibrations. A \emph{fibred parameterized adjunction} is a parameterized adjunction $(F,G)\padj(R,S)$ in $\Cat^\B{2}$,
between two 1-cells
\begin{displaymath}
  \xymatrix @C=.6in @R=.4in
{\ca{A}\times\ca{B}\ar[r]^-F
\ar[d]_-{H\times J} & \ca{C}\ar[d]^-K \\
\caa{X}\times\caa{Y}\ar[r]_-G & \caa{Z}}\quad\quad
 \xymatrix @C=.6in @R=.4in
{\ca{B}^\op\times\ca{C}\ar[r]^-R
\ar[d]_-{J^\op\times K} & \ca{A}\ar[d]^-H \\
\caa{Y}^\op\times\caa{Z}\ar[r]_-S & \caa{X}}
\end{displaymath}
where $R(B,-)$ is by default cartesian. Dually, an \emph{opfibred parameterized adjunction}
is as above, where $F(-,B)$ is by default cocartesian.
\end{defi}

The proposed definition of an enriched fibration is justified by the subsequent \cref{thm:actionenrichedfibr}
which fulfills our initial goal, i.e. to generalize \cref{actionenrich} to the context of (op)fibrations.
In \cref{equivenrfibr} we give an equivalent formulation in terms of enriched functors.
The enriched hom-functor is defined as in \cref{enrichedhomfunctor}, writing $\ca{A}$ for both the enriched and the underlying category.

\begin{defi}[\emph{Enriched Fibration}]\label{def:enrichedfibration}
Suppose $T:\ca{V}\to\caa{W}$ is a monoidal fibration. A fibration $P:\ca{A}\to\caa{X}$
is \emph{enriched} in $T$ when the following
conditions are satisfied:
\begin{itemize}
 \item the total category $\ca{A}$ is enriched in the total monoidal category $\ca{V}$ and the 
base category $\caa{X}$ is enriched in the base monoidal category $\caa{W}$,
in such a way that the following commutes: 
\begin{equation}\label{enrichedfibrationhom}
\xymatrix @C=.8in @R=.5in
{\ca{A}^\op\times\ca{A}\ar[r]^-{\ca{A}(-,-)}
\ar[d]_-{P^\op\times P} & \ca{V}\ar[d]^-T \\
\caa{X}^\op\times\caa{X}\ar[r]_-{\caa{X}(-,-)} &
\caa{W}}
\end{equation}
\item the composition law and the identities of the 
enrichments are compatible, in the 
sense that
\begin{align}\label{compositionidentitiesabove}
TM^{\ca{A}}_{A,B,C}&=M^{\caa{X}}_{PA,PB,PC} \\
Tj^\ca{A}_A&=j^{\caa{X}}_{PA}\notag
\end{align}
\end{itemize}
\end{defi}

The compatibilities \cref{compositionidentitiesabove} only state that the
composition and identities
\begin{displaymath}
M^{\ca{A}}_{A,B,C}:\ca{A}(B,C)\otimes_\ca{V}\ca{A}(A,B)\to\ca{A}(A,C), \quad
j^\ca{A}_A:I_\ca{V}\to\ca{A}(A,A)
\end{displaymath}
of the $\ca{V}$-enriched $\ca{A}$ are mapped, under $T$, exactly to those of the $\ca{W}$-enriched $\caa{X}$:
\begin{displaymath}
 M^{\caa{X}}_{\scriptscriptstyle{PA,PB,PC}}:\caa{X}(PB,PC)\otimes_\caa{W}\caa{X}(PA,PB)\to\caa{X}(PA,PC),\;\;
j^{\caa{X}}_{\scriptscriptstyle PA}:I_\caa{W}\to\caa{X}(PA,PA)
\end{displaymath}
where the domains and codomains already coincide by 
strict monoidality of $T$ and the commutativity of (\ref{enrichedfibrationhom}).

For the above definition, it could be argued that some sort of cartesian condition for
the enriched hom-functor $\ca{A}(-,-)$ should be asked; notice however that for $P$
a fibration, the product $P^\op\times P$ has neither a fibration not an opfibration structure.
If we required that the partial functor $\ca{A}(A,-)\colon\ca{A}\to\ca{V}$
is cartesian, all results below would still be valid with minor adjustments.
Since the examples (\cref{sec:applications}) so far do not seem to satisfy this extra condition, for the moment
we adhere to this more general definition.

\begin{rmk}[\emph{Enriched fibrations as enriched functors}]\label{equivenrfibr}
For a $T$-enriched fibration $P$ as above, $T$'s strict monoidal structure induces a 2-functor $\wt{T}\colon\VCat\to\WCat$
by \cref{changeofbase}. Hence we can make the $\ca{V}$-category
$\ca{A}$ into a $\caa{W}$-category $\wt{T}\ca{A}$, 
with the same set of objects $\ob\ca{A}$ and hom-objects $T\ca{A}(A,B)=\caa{X}(PA,PB)$.
Then $P\colon\ca{A}\to\caa{X}$ can be verified to have the structure of a $\caa{W}$-enriched functor between the 
$\caa{W}$-categories $\wt{T}\ca{A}$ and $\caa{X}$, with hom-objects mapping
$T\ca{A}(A,B)\xrightarrow{\;=\;}\caa{X}(PA,PB)$. The compatibility
with the composition and the identities is ensured by (\ref{compositionidentitiesabove}).

From this perspective, the definition of a $(T\colon\ca{V}\to\caa{W})$-enriched fibration between a $\ca{V}$-category $\ca{A}$
and a $\caa{W}$-category $\caa{X}$ could be reformulated as
a strictly fully faithful $\caa{W}$-enriched functor $P\colon\wt{T}\ca{A}\to\caa{X}$, whose
underlying ordinary functor $P_0\colon\ca{A}_0\to\caa{X}_0$ is a fibration (the commutativity of \cref{enrichedfibrationhom} follows).
\end{rmk}


Dually, we have the notion of an \emph{enriched opfibration}, as well as the following
combined version.

\begin{defi}\label{def:fibrenrichedinopfibr}
Suppose that $T:\ca{V}\to\caa{W}$ is a symmetric monoidal opfibration.
We say that a fibration $P:\ca{A}\to\caa{X}$ is \emph{enriched} in $T$
if the opfibration $P^\op:\ca{A}^\op\to\caa{X}^\op$ is 
an enriched $T$-opfibration. 
\end{defi}

Finally, we prove that to give a fibration with an action $(*,\diamond)$ of a monoidal fibration $T$ (\cref{def:Trepresentation}) with a 
fibred parameterized adjoint (\cref{def:fibredpadjoint}), is to give a $T$-enriched fibration (\cref{def:enrichedfibration}).

\begin{thm}\label{thm:actionenrichedfibr}
Suppose that $T:\ca{V}\to\caa{W}$ is a monoidal fibration,
which acts on an (ordinary) fibration $P:\ca{A}\to\caa{X}$
via the fibred 1-cell
\begin{displaymath}
 \xymatrix @C=.6in
{\ca{V}\times\ca{A}\ar[r]^-{*}\ar[d]_-{T\times P} & 
\ca{A}\ar[d]^-P \\
\caa{W}\times\caa{X}\ar[r]_-{\diamond} & \caa{X}.}
\end{displaymath}
If this action has a parameterized adjoint $(R,S)$ as in 
\begin{displaymath}
\xymatrix @C=.6in
{\ca{A}^\op\times\ca{A}\ar[r]^-{R}\ar[d]_-{P^\op\times P} & 
\ca{V}\ar[d]^-T \\
\caa{X}^\op\times\caa{X}\ar[r]_-{S} & \caa{W}} 
\end{displaymath}
we can enrich the fibration 
$P$ in the monoidal fibration $T$.
\end{thm}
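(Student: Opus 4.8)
The plan is to mirror the proof of \cref{actionenrich} componentwise, applying it once to the total action $*\colon\ca{V}\times\ca{A}\to\ca{A}$ and once to the base action $\diamond\colon\caa{W}\times\caa{X}\to\caa{X}$, and then to verify that the resulting two enrichments are compatible over $T$ in the sense of \cref{def:enrichedfibration}. First I would unpack the hypothesis: by \cref{def:Trepresentation}, the fibred $1$-cell $(*,\diamond)$ supplies two genuine actions of monoidal categories, $*$ of $\ca{V}$ on $\ca{A}$ and $\diamond$ of $\caa{W}$ on $\caa{X}$, with structure isomorphisms $\chi^\ca{A},\nu^\ca{A}$ and $\chi^\caa{X},\nu^\caa{X}$ satisfying \cref{eq:chinu,actiondiag}. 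The fibred parameterized adjoint $(R,S)$ of \cref{def:fibredpadjoint} gives, for each fixed second variable, partial adjunctions $(-)*A\dashv R(A,-)$ in $\ca{A}$ and $(-)\diamond X\dashv S(X,-)$ in $\caa{X}$, with the cartesianness of $R(A,-)$ built in. Applying \cref{actionenrich} to each action separately yields a $\ca{V}$-enrichment $\underline{\ca{A}}$ of $\ca{A}$ with hom-objects $R(A,B)$ and a $\caa{W}$-enrichment $\underline{\caa{X}}$ of $\caa{X}$ with hom-objects $S(X,Y)$, each equipped with composition and identities obtained as adjuncts of \cref{compositionunderadjunction,identityunderadjunction}.

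The crux is then the compatibility, and this is where the fibred structure does the real work. To get commutativity of \cref{enrichedfibrationhom}, I would observe that on objects $TR(A,B)=S(PA,PB)$ holds precisely because $(R,S)$ is a morphism in $\Cat^\B{2}$ (the square \cref{defRS} commutes), which is exactly the content guaranteed by \cref{thm:parameterCat2}; the commutativity on morphisms follows from the uniqueness clause there, since $T\circ R(f,g)$ and $S(Pf,Pg)$ are both characterized by the same naturality requirement. For the composition and identity compatibilities \cref{compositionidentitiesabove}, the key is that $T$ strictly preserves $\otimes$ and $I$ and, being the vertical map of the fibred $1$-cell $(*,\diamond)$, strictly relates the two actions: the compatibility conditions \cref{twoactionscompatible} give $P\chi^\ca{A}=\chi^\caa{X}$ and $P\nu^\ca{A}=\nu^\caa{X}$ above the appropriate objects. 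Since $M^{\ca{A}}$ and $j^{\ca{A}}$ are defined as adjuncts of composites built from $\chi^\ca{A},\nu^\ca{A}$ and the counits $\varepsilon$ of $(-)*A\dashv R(A,-)$, applying $T$ and using that $(R,S)$ is a map of adjunctions (so $T$ carries the counit $\varepsilon$ of the total adjunction to the counit of the base adjunction) transports the defining composite \cref{compositionunderadjunction} for $\ca{A}$ onto the corresponding composite for $\caa{X}$. Taking adjuncts and using the naturality of the adjunction isomorphisms then forces $TM^{\ca{A}}=M^{\caa{X}}$ and $Tj^{\ca{A}}=j^{\caa{X}}$.

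The main obstacle I anticipate is bookkeeping the interaction between taking adjuncts and applying $T$: the enriched composition lives on the far side of an adjunction isomorphism, so to prove $TM^{\ca{A}}=M^{\caa{X}}$ I cannot simply apply $T$ to an equation between named arrows but must instead transpose back across the adjunction, apply $T$ to the explicit composite \cref{compositionunderadjunction} living in $\ca{A}$, identify it with the analogous composite in $\caa{X}$ using \cref{twoactionscompatible} and the map-of-adjunctions property, and then transpose forward again. Making this rigorous requires that the naturality square relating the two adjunction isomorphisms $\ca{A}(X*A,B)\cong\ca{V}(X,R(A,B))$ and $\caa{X}(PX\diamond PA,PB)\cong\caa{W}(PX,S(PA,PB))$ commutes with $T$, which is precisely the compatibility encoded in $(R,S)$ being a $\Cat^\B{2}$-parameterized adjoint. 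Once this single transposition compatibility is established, the identity compatibility $Tj^{\ca{A}}=j^{\caa{X}}$ follows by the same argument applied to the simpler composite \cref{identityunderadjunction}. I would close by noting, as in \cref{equivenrfibr}, that the verification can be packaged cleanly by recognizing $P$ as a strictly fully faithful $\caa{W}$-enriched functor $\wt{T}\,\underline{\ca{A}}\to\underline{\caa{X}}$, which gives a conceptual check that the two enrichments assemble into a $T$-enriched fibration.
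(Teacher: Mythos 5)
Your proposal follows essentially the same route as the paper's proof: apply \cref{actionenrich} separately to the two actions $*$ and $\diamond$, obtain the commutativity of \cref{enrichedfibrationhom} from the square \cref{defRS} guaranteed by \cref{thm:parameterCat2}, and verify \cref{compositionidentitiesabove} by transposing across the partial adjunctions using \cref{twoactionscompatible} together with the map-of-adjunctions property. The only slip is notational: the relevant map of adjunctions is the pair $(P,T)$, and it is $P$ (not $T$) that carries the counit of $-*A\dashv R(A,-)$ to that of $-\diamond PA\dashv S(PA,-)$, since the counits and the transposed composites live in $\ca{A}$ and $\caa{X}$ rather than in $\ca{V}$ and $\caa{W}$.
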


\begin{proof}
Recall by \cref{def:Trepresentation} that the $T$-action in particular consists of two actions $*$ and $\diamond$
of the monoidal categories
$\ca{V}$ and $\caa{W}$ on the categories $\ca{A}$ and $\caa{X}$ respectively. Since $(*,\diamond)\padj(R,S)$,
by \cref{thm:parameterCat2} 
we have two ordinary adjunctions 
\begin{displaymath}
\xymatrix @C=.5in
{\ca{V}\ar@<+.8ex>[r]^-{-*A}
\ar@{}[r]|-\bot & \ca{A}\ar@<+.8ex>[l]^-{R(A,-)}}
\quad\mathrm{and}\quad
\xymatrix @C=.5in
{\caa{W}\ar@<+.8ex>[r]^-{-\diamond X}
\ar@{}[r]|-\bot & \caa{X}\ar@<+.8ex>[l]^-{S(X,-)}}
\end{displaymath}
for all $A\in\ca{A}$ and $X\in\caa{X}$. By \cref{actionenrich}, there exists a $\ca{V}$-category $\underline{\ca{A}}$ with underlying
category $\ca{A}$ and hom-objects $\underline{\ca{A}}(A,B)=R(A,B)$ and also
a $\caa{W}$-category $\underline{\caa{X}}$ with underlying category $\caa{X}$ 
and hom-objects $\underline{\caa{X}}(X,Y)=S(X,Y)$. Also, the enriched hom-functors
satisfy the required commutativity $TS(-,-)=R(P-,P-)$ by \cref{defRS}. 

Finally, we need to show that the composition and identity laws
of the enrichments are compatible as in (\ref{compositionidentitiesabove}), i.e. $TM^\ca{A}_{A,B,C}=M^\caa{X}_{PA,PB,PC}$ and
$Tj^\ca{A}_A=j^\caa{X}_{PA}$. For that, it is enough
to confirm that their adjuncts under $(-\diamond X)\dashv S(X,-)$ coincide.
The latter ones are explicitly given by \cref{compositionunderadjunction}
and \cref{identityunderadjunction}, i.e.
\begin{gather*}
\begin{tikzcd}[ampersand replacement = \&,column sep=.8in]
\left(S(PB,PC)\otimes S(PA,PB)\right)\diamond PA\ar[r,"\chi^{\caa{X}}"]\ar[ddr,dashed] \&
S(PB,PC)\diamond\left(S(PA,PB)\diamond PA\right)\ar[d,"1\diamond\varepsilon_{PB}"] \\
\& S(PB,PC)\diamond PB\ar[d,"\varepsilon_{PC}"] \\
\& PC
\end{tikzcd} \\
I\diamond PA\xrightarrow{\nu^\caa{X}}PA
\end{gather*}
For the former ones, since $(P,T)$ is a map of adjunctions
\begin{displaymath}
 \xymatrix @C=.9in @R=.5in
{\ca{V}\ar[d]_-T\ar@<+.8ex>[r]^-{-*A}
\ar@{}[r]|-{\bot} &
\ca{A}\ar[d]^-P\ar@<+.8ex>[l]^-{R(A,-)} \\
\caa{W}\ar@<+.8ex>[r]^-{-\diamond PA}\ar@{}[r]|-{\bot} & 
\caa{X},\ar@<+.8ex>[l]^-{S(PA,-)}}
\end{displaymath}
taking the images of $M^\ca{A}_{A,B,C}$ and $j^\ca{A}_A$ under $T$ and translating under the adjunction
$(-\diamond X)\dashv S(X,-)$ is the same as first translating under
$(-*A)\dashv R(A,-)$ and then applying $P$. That produces
\begin{gather*}
\begin{tikzcd}[ampersand replacement = \&,column sep=.8in]
P\left(R(B,C)\otimes R(A,B)\right)*A)\ar[r,"P\chi^{\ca{A}}"]\ar[ddr,dashed] \&
P(R(B,C)*R(A,B)*A)\ar[d,"P(1*\varepsilon_{B})"] \\
\& P(R(B,C)*B)\ar[d,"P(\varepsilon_{C})"] \\
\& PC
\end{tikzcd} \\
 P(I*A)\xrightarrow{P(\nu^\caa{A})}PA
\end{gather*}
Since $P\chi^{\ca{A}}=\chi^\caa{X}$
and $P\nu^\ca{A}=\nu^\caa{X}$ from \cref{twoactionscompatible}, and also $P\varepsilon=\varepsilon_P$ as a map
of adjunctions, the above composites coincide and the proof is complete.
\end{proof}

An important first example that should fit this setting of an action-induced enrichment
is that of a \emph{closed} monoidal fibration. 
Just like a monoidal closed category $\ca{V}$ is one where $\otimes\colon\ca{V}\times\ca{V}\to\ca{V}$ has a (right)
parameterized adjoint via $-\otimes X\dashv [X,-]$ for every object $X$, we can consider the following notion
based on \cref{def:fibredpadjoint}.

\begin{defi}\label{def:monoidalclosedfibr}
A monoidal fibration $T\colon\ca{V}\to\caa{W}$ is (right) \emph{closed} when its 
tensor product fibred 1-cell \cref{multunitmonoidalfibr}
\begin{displaymath}
\xymatrix @C=.6in
{\ca{V}\times\ca{V}\ar[r]^-{\otimes_\ca{V}}
\ar[d]_-{T\times T} & \ca{V}\ar[d]^-T \\
\caa{W}\times\caa{W}\ar[r]_-{\otimes_{\caa{W}}} & 
\caa{W}}
\end{displaymath}
has a parameterized adjoint
\begin{displaymath}
\xymatrix @C=.6in
{\ca{V}^\op\times\ca{V}\ar[r]^-{[-,-]_\ca{V}}
\ar[d]_-{T^\op\times T} & \ca{V}\ar[d]^-T \\
\caa{W}^\op\times\caa{W}\ar[r]_-{[-,-]_{\caa{W}}} & 
\caa{W}}
\end{displaymath}
Equivalently, by \cref{thm:parameterCat2}, $T$ is monoidal closed when
\begin{enumerate}[(i)]
 \item $\ca{V}$ and $\caa{W}$ are monoidal closed categories,
 \item $T$ is a strict closed functor, 
 \item $T\varepsilon=\varepsilon_T$ and $\eta_T=T\eta$ for the respective units and counits of the adjunctions.
\end{enumerate}
\end{defi}

Notice that by \cref{Winskellemma}, the right adjoint $[V,-]_\ca{V}$ between the total categories
automatically preserves cartesian liftings. On the other hand, for the dual notion of a
\emph{monoidal closed opfibration}, the right adjoint is not cocartesian by default.

\begin{rmk}
 In \cite[\S 13]{Framedbicats}, definitions of an \emph{internally closed} monoidal fibration over a cartesian monoidal base,
 as well as \emph{externally closed} monoidal fibration over an arbitrary monoidal base are given. These are equivalent to
 each other under certain hypotheses; none, however, guarantee that the total category is closed on its own right.
 The external definition gives some, but not all, conditions in terms of the fibres and the reindexing functors
 for a fibred adjoint to exist, in the spirit of results such as \cref{totaladjointthm}. 
\end{rmk}

Applying \cref{thm:actionenrichedfibr} we can deduce the enrichment of a monoidal closed
fibration in itself, analogously
to the ordinary case (\cref{rmk:VenrichedinV}). 

\begin{prop}\label{prop:enriched_closed_fib}
A monoidal closed fibration $T\colon\ca{V}\to\caa{W}$ is $T$-enriched. Dually, a monoidal
closed opfibration is enriched in itself.
\end{prop}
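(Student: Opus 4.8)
The plan is to apply \cref{thm:actionenrichedfibr} to the regular self-action of the monoidal fibration $T$ on itself, exactly mirroring how \cref{rmk:VenrichedinV} deduces the self-enrichment of a monoidal closed category from \cref{actionenrich}. First I would exhibit the requisite $T$-representation: take $P = T$ and let the action $(*, \diamond)$ be the regular action given by the tensor products, i.e.\ $* = \otimes_{\ca{V}} \colon \ca{V}\times\ca{V}\to\ca{V}$ and $\diamond = \otimes_{\caa{W}} \colon \caa{W}\times\caa{W}\to\caa{W}$, with action constraints $\chi = a$ and $\nu = \ell$ supplied by the associators and left unitors on each level. The fact that $(\otimes_{\ca{V}}, \otimes_{\caa{W}})$ is a fibred $1$-cell with $\otimes_{\ca{V}}$ cartesian is precisely condition (iii) of \cref{def:monoidalfibration}, and the compatibility conditions \cref{twoactionscompatible}, namely $Ta^{\ca{V}} = a^{\caa{W}}$ and $T\ell^{\ca{V}} = \ell^{\caa{W}}$, are exactly the clauses recorded when unpacking the pseudomonoid structure on $T$ (the strict monoidality of $T$ discussed around \cref{multunitmonoidalfibr}). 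So the regular self-action genuinely makes $T$ into a $T$-representation in the sense of \cref{def:Trepresentation}.

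Next I would observe that the closedness hypothesis supplies exactly the parameterized adjoint needed to invoke \cref{thm:actionenrichedfibr}. By \cref{def:monoidalclosedfibr}, the tensor fibred $1$-cell $(\otimes_{\ca{V}}, \otimes_{\caa{W}})$ has a parameterized adjoint $([-,-]_{\ca{V}}, [-,-]_{\caa{W}})$ in $\B{Cat}^{\B{2}}$, and moreover \cref{Winskellemma} guarantees the partial right adjoint $[V,-]_{\ca{V}}$ preserves cartesian liftings, so this is genuinely a \emph{fibred} parameterized adjoint in the sense of \cref{def:fibredpadjoint}. This is precisely the datum $(R,S) = ([-,-]_{\ca{V}}, [-,-]_{\caa{W}})$ that \cref{thm:actionenrichedfibr} requires of the action $(*,\diamond) = (\otimes_{\ca{V}}, \otimes_{\caa{W}})$. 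Feeding this into the theorem yields a $T$-enriched structure on the fibration $T$, whose total hom-objects are $\underline{\ca{V}}(A,B) = [A,B]_{\ca{V}}$ and whose base hom-objects are $\underline{\caa{W}}(X,Y) = [X,Y]_{\caa{W}}$; the commutativity \cref{enrichedfibrationhom} and the compatibilities \cref{compositionidentitiesabove} come for free from the conclusion of \cref{thm:actionenrichedfibr}.

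For the dual statement, I would run the identical argument in $\OpFib$, applying the dual of \cref{thm:actionenrichedfibr} to the regular self-action of a monoidal closed opfibration; here condition (iii) of the opfibration analogue ensures $\otimes$ preserves cocartesian liftings, and the left-adjoint half of \cref{Winskellemma} handles the opfibred side. The main thing to be careful about is not a deep obstacle but a bookkeeping point: verifying that the regular action really satisfies all the clauses of \cref{def:Trepresentation} simultaneously on both total and base levels, and in particular that the two compatibility equations $P\chi^{\ca{A}} = \chi^{\caa{X}}$ and $P\nu^{\ca{A}} = \nu^{\caa{X}}$ reduce here to the already-established identities $Ta^{\ca{V}} = a^{\caa{W}}$ and $T\ell^{\ca{V}} = \ell^{\caa{W}}$. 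Once that identification is made explicit, the proposition is an immediate corollary of \cref{thm:actionenrichedfibr}, just as \cref{rmk:VenrichedinV} is an immediate corollary of \cref{actionenrich}, so I expect the write-up to be short.
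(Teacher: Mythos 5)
Your proposal is correct and follows essentially the same route as the paper: exhibit the regular self-action $(\otimes_{\ca{V}},\otimes_{\caa{W}})$ as a $T$-representation using condition (iii) of \cref{def:monoidalfibration} and the strict monoidality identities $Ta^{\ca{V}}=a^{\caa{W}}$, $T\ell^{\ca{V}}=\ell^{\caa{W}}$, then feed the parameterized adjoint supplied by closedness into \cref{thm:actionenrichedfibr}. The paper's own proof is exactly this argument, stated more briefly.
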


\begin{proof}
All clauses of \cref{def:Trepresentation} are satisfied, since
the functor $-\otimes_\ca{V}-$ is cartesian in both variables by \cref{def:monoidalfibration},
and also $Ta^\ca{V}_{XYZ}=a^\ca{W}_{TXTYTZ}$ and $T\ell^{\ca{V}}_X=\ell^{\caa{W}}_{TX}$ for
the respective associator and the left unitor since $T$ is a strict monoidal functor.
Therefore $(\otimes_\ca{V},\otimes_\caa{W})$ is indeed a $T$-action,
just like the regular representation of a monoidal category earlier. 
Since this action has a parameterized adjoint, by definition of a monoidal closed fibration, the result follows.
\end{proof}


Finally, there is a dual version to \cref{thm:actionenrichedfibr}, characterizing
the enrichment of an opfibration in a monoidal opfibration.

\begin{thm}\label{thm:actionenrichedopfibr}
Suppose that $T:\ca{V}\to\caa{W}$ is a monoidal opfibration,
which acts on an (ordinary) opfibration $U:\ca{B}\to\caa{Y}$
via the opfibred 1-cell
\begin{displaymath}
 \xymatrix @C=.6in
{\ca{V}\times\ca{B}\ar[r]^-{*}\ar[d]_-{T\times U} & 
\ca{B}\ar[d]^-U \\
\caa{W}\times\caa{Y}\ar[r]_-{\diamond} & \caa{Y}.}
\end{displaymath}
If this action has a parameterized adjoint $(R,S)$ as in
\begin{displaymath}
\xymatrix @C=.6in
{\ca{B}^\op\times\ca{B}\ar[r]^-{R}\ar[d]_-{U^\op\times U} & 
\ca{V}\ar[d]^-T \\
\caa{Y}^\op\times\caa{Y}\ar[r]_-{S} & \caa{W}} 
\end{displaymath}
we can enrich the opfibration $U$ in the monoidal opfibration $T$.
\end{thm}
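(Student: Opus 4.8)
The plan is to mirror the proof of \cref{thm:actionenrichedfibr} verbatim, with every fibrational datum replaced by its opfibrational counterpart. The observation that makes this transfer routine is that the two engines driving the fibred argument — the parameterized-adjunction recognition of \cref{thm:parameterCat2} and the action-induced enrichment of \cref{actionenrich} — are statements about $\Cat^\B{2}$ and $\B{Cat}$ respectively, and are therefore blind to whether the vertical functors are fibrations or opfibrations. Accordingly, I would first unpack the opfibred action $(*,\diamond)\padj(R,S)$ into its two underlying ordinary actions $*\colon\ca{V}\times\ca{B}\to\ca{B}$ and $\diamond\colon\caa{W}\times\caa{Y}\to\caa{Y}$, and then extract from the opfibred parameterized adjoint, via \cref{thm:parameterCat2}, the two families of ordinary adjunctions $-*A\dashv R(A,-)$ and $-\diamond X\dashv S(X,-)$ for all $A\in\ca{B}$ and $X\in\caa{Y}$.

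Next, \cref{actionenrich} applied to each action produces a $\ca{V}$-category $\underline{\ca{B}}$ with underlying category $\ca{B}$ and hom-objects $\underline{\ca{B}}(A,B)=R(A,B)$, together with a $\caa{W}$-category $\underline{\caa{Y}}$ with underlying category $\caa{Y}$ and hom-objects $\underline{\caa{Y}}(X,Y)=S(X,Y)$, whose composition and identity laws are the adjuncts of the composites \cref{compositionunderadjunction,identityunderadjunction}. The commutativity of the enriched hom-functor square, now reading $TR(-,-)=S(U-,U-)$, is immediate from the defining square \cref{defRS} of the parameterized adjoint specialised to $H=K=U$, since this gives $TR(A,B)=S(UA,UB)$ on objects and fixes the arrow map uniquely. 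It then remains to verify the compatibility conditions dual to \cref{compositionidentitiesabove}, namely $TM^\ca{B}_{A,B,C}=M^\caa{Y}_{UA,UB,UC}$ and $Tj^\ca{B}_A=j^\caa{Y}_{UA}$.

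For this last verification I would argue exactly as in the fibred case: since $(U,T)$ is a map of the adjunctions $(-*A\dashv R(A,-))\to(-\diamond UA\dashv S(UA,-))$, transporting the images under $T$ of $M^\ca{B}$ and $j^\ca{B}$ along $(-\diamond UA)\dashv S(UA,-)$ agrees with first transporting under $(-*A)\dashv R(A,-)$ and then applying $U$. The two resulting composites, built from $\chi$, $\nu$ and the counit $\varepsilon$, then coincide because $U\chi^\ca{B}=\chi^\caa{Y}$ and $U\nu^\ca{B}=\nu^\caa{Y}$ — the opfibred analogue of \cref{twoactionscompatible} — together with $U\varepsilon=\varepsilon_U$ from the map of adjunctions.

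I expect the only point genuinely requiring care, rather than a true obstacle, to be the bookkeeping of the (co)cartesian hypotheses. In the opfibred setting the left adjoint $-*A$ is cocartesian by default, while the right adjoints $R(A,-)$ and $S(X,-)$ need not be, so the enriched hom-functor is not asked to be cocartesian; this matches the asymmetry recorded in the remark following \cref{def:monoidalclosedfibr}, and reflects the fact that neither \cref{actionenrich} nor the compatibility computation ever invokes cocartesianness of the right adjoints. One should simply confirm that the opfibred 1-cell status of $(*,\diamond)$ and the opfibred-parameterized status of $(R,S)$ supply precisely the data demanded by the dual of \cref{def:Trepresentation} and by the enriched-opfibration definition, after which the enrichment of $U$ in $T$ follows.
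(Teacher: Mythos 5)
Your proposal is correct and is exactly the argument the paper intends: the paper states \cref{thm:actionenrichedopfibr} without proof as the formal dual of \cref{thm:actionenrichedfibr}, and your step-by-step dualization (two ordinary adjunctions via \cref{thm:parameterCat2}, two enrichments via \cref{actionenrich}, compatibility via the map of adjunctions and the opfibred analogue of \cref{twoactionscompatible}) reproduces that proof faithfully. Your closing remark on the cocartesianness bookkeeping — that only the left adjoint $-*A$ is cocartesian by default while $R$ need not be partially cocartesian — matches the asymmetry the paper itself records in the remark following the theorem.
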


\begin{rmk}
The asymmetry between cartesian and cocartesian functors with regards to fibred and opfibred adjunctions
is still apparent when comparing Theorems \ref{thm:actionenrichedfibr} and \ref{thm:actionenrichedopfibr}.
For the former, \cref{Winskellemma} ensures that the right parameterized adjoint will be, at least partially as $R(A,-)$,
cartesian; as a result, the whole parameterized adjunction lifts from $\Cat^\B{2}$ to $\Fib$.
On the other hand, for the latter dual theorem, the assumptions cannot ensure that the enriched hom $R$
will be partially cocartesian.
One reason for this discrepancy is that even if we change our setting from $\Fib_{(\opl)}$ to $\OpFib_{(\lax)}$, the enrichment
is given in both cases by the existence of a \emph{right} adjoint (and not of a left one in the dual setting).
\end{rmk}

\section{Applications}\label{sec:applications}

In this final chapter, we exhibit a few examples of the enriched fibration notion.
In these cases, Theorems \ref{thm:actionenrichedfibr} and \ref{thm:actionenrichedopfibr} seem to be the easiest way to deduce the enrichment,
due to the fact that the enrichments on the level of bases and total categories are themselves obtained
by the similarly-flavored \cref{actionenrich}. In what follows, we do not present all the relevant theory as it would take up many pages;
instead we provide the appropriate
references, in the hope that the interested reader will look for the details therein.

\subsection{(Co)modules over (co)monoids}\label{app2}

In the context of a locally presentable symmetric monoidal closed category $\ca{V}$, previous work \cite{Measuringcomonoid}
establishes an enrichment of the category of monoids $\Mon(\ca{V})$ in the symmetric monoidal
category of comonoids $\Comon(\ca{V})$, via \cref{actionenrich}. The action of comonoids on monoids is
induced by the internal hom of $\ca{V}$: for any coalgebra $C$ and algebra $B$, $[C,B]$ has always the structure
of an algebra via the convolution product. Its (right) parametrized adjoint $P\colon\Mon(\ca{V})^\op\times\Mon(\ca{V})
\to\Comon(\ca{V})$ which is the enriched hom-functor is called the \emph{Sweedler hom}, since the original notion of
a \emph{measuring coalgebra} $P(A,B)$ goes back to \cite{Sweedler}.

Furthermore, in \cite{Measuringcomodule} a similarly action-induced enrichment is established for the
\emph{global} category of modules in the symmetric monoidal global category of comodules, i.e. the category of all (co)modules over
any (co)monoid in $\ca{V}$. The action again comes from the internal hom of the monoidal category,
and its parametrized adjoint $Q\colon\Mod^\op\times\Mod\to\Comod$ maps an $A$-module $M$
and a $B$-comodule $N$ to their \emph{measuring comodule} $Q(M,N)$ \cite{Batchelor}. 
This parameterized adjunction is obtained itself using the theory of fibred adjunctions, since the functor $U\colon\Mod\to\Mon(\ca{V})$
which gives the `underlying' algebra is a fibration and $V\colon\Comod\to\Comon(\ca{V})$ is an opfibration.
Therefore the very enrichment on the level of the total categories
is accomplished via \cref{totaladjointthm}, producing a general (lax) opfibred adjunction
\begin{equation}\label{hugediag1}
\xymatrix @R=.65in @C=1in
{\Mod^\op \ar @/^/[r]^-{Q(-,N_B)}\ar@{}[r]|-{\top} \ar[d]_-{U^\op} &
\Comod \ar @/^/[l]^-{[-,N_B]^\mathrm{op}}\ar[d]^-V \\ 
\Mon(\ca{V})^\mathrm{op}\ar @/^/[r]^-{P(-,B)}
\ar@{}[r]|-{\top} & \Comon(\ca{V}) \ar @/^/[l]^-{[-,B]^\mathrm{op}}}
\end{equation}

To establish an enriched opfibration structure, we apply \cref{thm:actionenrichedopfibr}. First of all, 
$V\colon\Comod\to\Comon(\ca{V})$ can be shown to be a monoidal
opfibration, by definition of the monoidal product in $\Comod$. Moreover, since both actions on the level
of total and base categories are in fact the internal hom of $\ca{V}$ restricted to the appropriate subcategories,
compatibility \cref{twoactionscompatible} also follows. Finally, the top action is cartesian \cite[(20)]{Measuringcomodule} hence
the opfibration $U^\op$ is enriched in $V$, as in \cref{def:fibrenrichedinopfibr}.

\begin{prop}
Suppose $\ca{V}$ is a locally presentable symmetric
monoidal closed category. 
The fibration $\Mod\to\Mon(\ca{V})$ is enriched in the monoidal opfibration $\Comod\to\Comon(\ca{V})$.
\end{prop}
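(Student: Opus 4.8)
The plan is to deduce the statement from \cref{thm:actionenrichedopfibr} by way of \cref{def:fibrenrichedinopfibr}. Since $\ca{V}$ is symmetric monoidal closed, the global comodule opfibration $V\colon\Comod\to\Comon(\ca{V})$ is a \emph{symmetric} monoidal opfibration, so by \cref{def:fibrenrichedinopfibr} it suffices to exhibit the opposite $U^\op\colon\Mod^\op\to\Mon(\ca{V})^\op$ of the underlying-algebra fibration $U\colon\Mod\to\Mon(\ca{V})$ as an enriched $V$-opfibration. I would therefore check, one at a time, the hypotheses of \cref{thm:actionenrichedopfibr}: that $V$ is a monoidal opfibration, that there is a $T$-action of $V$ on the opfibration $U^\op$, and that this action admits an opfibred parameterized adjoint.

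First I would confirm that $V$ satisfies the opfibred form of \cref{def:monoidalfibration}: it is an opfibration, $V$ is a strict symmetric monoidal functor, and the monoidal product of $\Comod$ preserves cocartesian liftings. These are properties of the tensor product of the global comodule category, for which I would cite the construction in \cite{Measuringcomodule}. Next I would assemble the action. Both the base action $\diamond\colon\Comon(\ca{V})\times\Mon(\ca{V})^\op\to\Mon(\ca{V})^\op$ and the total action $*\colon\Comod\times\Mod^\op\to\Mod^\op$ are restrictions of the internal hom $[-,-]$ of $\ca{V}$ to the appropriate subcategories (using that $[C,B]$ carries a monoid structure when $C$ is a comonoid and $B$ a monoid, and the analogous statement that $[X,M]$ is a module for $X$ a comodule and $M$ a module). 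Because both are cut out of the single functor $[-,-]$, the structure isomorphisms of the two actions agree under $V$ on the nose, so the compatibility \cref{twoactionscompatible} demanded by \cref{def:Trepresentation} is immediate. To see that $(*,\diamond)$ is an \emph{opfibred} $1$-cell I would appeal to the (co)cartesianness of the action recorded in \cite[(20)]{Measuringcomodule}, taking opposites where needed so that the action preserves cocartesian liftings for the opfibration $U^\op$.

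It then remains to produce the parameterized adjoint $(R,S)$ of the action: on the base this is the Sweedler hom $P$ of \cite{Measuringcomonoid}, and on the total category the measuring-comodule functor $Q$, together forming the general (lax) opfibred adjunction displayed in \cref{hugediag1}. The crucial point is that this adjunction is \emph{itself} obtained from \cref{totaladjointthm}: since $U\colon\Mod\to\Mon(\ca{V})$ is a fibration and $V$ an opfibration, and since the fibrewise right adjoints required by that theorem exist (here the local presentability of $\ca{V}$ lets the adjoint functor theorem apply), \cref{totaladjointthm} delivers the total right adjoint $Q$ and hence the parameterized adjoint assembling \cref{hugediag1}. With every hypothesis in place, \cref{thm:actionenrichedopfibr} equips $U^\op$ with the structure of an enriched $V$-opfibration, which by \cref{def:fibrenrichedinopfibr} is exactly the claimed enrichment of $\Mod\to\Mon(\ca{V})$ in $\Comod\to\Comon(\ca{V})$.

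I expect the main obstacle to lie not in the final assembly, which is a direct appeal to \cref{thm:actionenrichedopfibr}, but in its two substantive inputs: that $V$ is a monoidal opfibration whose tensor preserves cocartesian liftings, and that the internal-hom action admits the parameterized adjoint packaged in \cref{hugediag1}. Both are the technical core of \cite{Measuringcomodule,Measuringcomonoid}; the role of the present argument is to confirm that they fit the abstract template, and in particular that the single (co)cartesianness condition \cite[(20)]{Measuringcomodule} is what lifts the $\Cat^\B{2}$-level parameterized adjunction to an opfibred one in the sense of \cref{def:fibredpadjoint}.
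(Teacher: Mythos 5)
Your proposal is correct and follows essentially the same route as the paper: it applies \cref{thm:actionenrichedopfibr} via \cref{def:fibrenrichedinopfibr}, verifying that $\Comod\to\Comon(\ca{V})$ is a monoidal opfibration, that the internal-hom-induced actions on the total and base levels are compatible as in \cref{twoactionscompatible}, that the top action is (co)cartesian by \cite[(20)]{Measuringcomodule}, and that the parameterized adjoint of \cref{hugediag1} is supplied by \cref{totaladjointthm}. The paper's own argument is a more compressed version of exactly these steps.
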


An example of such a monoidal category $\ca{V}$, which also motivated this whole development,
is the category of modules over a commutative ring, $\Mod_R$. Both enriching functors arise as adjoints
to the linear maps space functor $\Mod_R(-,-)$ restricted to the respective subcategories. In particular, for
two arbitrary modules $M$ and $N$ over $R$-algebras $A$ and $B$, the measuring comodule $Q(M,N)$
which provides the enrichment of modules in comodules has its coaction over the Sweedler's measuring $R$-coalgebra $P(A,B)$ which
provides the enrichment of algebras in coalgebras. Similarly, the comodule composition maps $Q(N,S)\otimes_R Q(M,N)\to Q(M,S)$
are above the coalgebra composition maps $P(B,C)\otimes_R P(A,B)\to P(A,B)$. 
This is a substantial step exhibiting the tight relations between these
dual-flavored, standard (op)fibrations.

Furthermore, the forgetful $\Comod\to\Comon(\ca{V})$ is in fact an example of a monoidal closed opfibration, \cref{def:monoidalclosedfibr}.
First of all, it is the case that the category of comonoids is monoidal closed, if $\ca{V}$ is locally presentable and 
symmetric monoidal closed, as was already proved in \cite[3.2]{MonComonBimon}.
In \cite[4.5]{Measuringcomodule}, it is shown in detail how the opfibred 1-cell
\begin{displaymath}
\xymatrix @C=.5in @R=.5in
{\Comod\times\Comod\ar[r]^-{(-\otimes-)} \ar[d] &
\Comod\ar[d] \\
\Comon(\ca{V})\times\Comon(\ca{V})\ar[r]_-{(-\otimes-)} & \Comon(\ca{V})}
\end{displaymath}
has a parameterized right adjoint, so the result follows by \cref{prop:enriched_closed_fib}.
\begin{prop}
 Suppose $\ca{V}$ is a locally presentable symmetric monoidal closed category. The monoidal opfibration
 $\Comod\to\Comon(\ca{V})$ is closed, therefore enriched in itself.
\end{prop}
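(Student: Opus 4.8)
The plan is to verify that $\Comod\to\Comon(\ca{V})$ satisfies the dual of \cref{def:monoidalclosedfibr}, i.e. that it is a monoidal closed \emph{opfibration}, and then to read off the conclusion from the dual half of \cref{prop:enriched_closed_fib}. Concretely, closedness asks that the tensor-product opfibred $1$-cell displayed in the statement admit a parameterized adjoint in $\Cat^\B{2}$; by the equivalent formulation in \cref{def:monoidalclosedfibr}, this amounts to checking that $\Comod$ and $\Comon(\ca{V})$ are monoidal closed, that $V\colon\Comod\to\Comon(\ca{V})$ is a strict closed functor, and that the unit/counit compatibilities $V\varepsilon=\varepsilon_V$, $\eta_V=V\eta$ hold. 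I would assemble these from the two references cited in the surrounding discussion.

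First I would record that the base $\Comon(\ca{V})$ is monoidal closed: since $\ca{V}$ is locally presentable and symmetric monoidal closed, this is \cite[3.2]{MonComonBimon}, which supplies the internal hom $[-,-]_{\Comon}$ together with its transposition isomorphism. Next, for the total category, I would invoke \cite[4.5]{Measuringcomodule}, where the tensor-product opfibred $1$-cell on $\Comod$ is shown to possess a parameterized right adjoint; this exhibits $\Comod$ itself as monoidal closed and produces the internal hom $[-,-]_{\Comod}$. Thus both clauses (i) of \cref{def:monoidalclosedfibr} hold, at the base and at the total level.

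The remaining work — and the main obstacle — is to check that these two closed structures are compatible over $V$, equivalently (via \cref{thm:parameterCat2}) that the two parameterized adjoints fit together into a single parameterized adjunction in $\Cat^\B{2}$ over the square of the statement. Here I would exploit the fact, established in the preceding discussion, that both internal homs are liftings/restrictions of the internal hom of $\ca{V}$ to the respective subcategories of comodules and comonoids, exactly as the measuring comodule $Q(M,N)$ sits over the measuring coalgebra $P(A,B)$. Consequently $V$ commutes strictly with the internal homs and the transposition bijections agree on the nose, giving strict closedness of $V$ together with the unit/counit compatibilities. I would also note that the asymmetry flagged in the preceding remark — that for a monoidal closed opfibration the right adjoint need not be cocartesian — does not interfere: closedness requires only that the tensor opfibred $1$-cell, whose left leg $(-\otimes-)$ is cocartesian by the very definition of a monoidal opfibration, admit a parameterized adjoint, and imposes no cocartesianness on the adjoint itself.

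Having shown that $\Comod\to\Comon(\ca{V})$ is a monoidal closed opfibration, its regular action on itself carries the required parameterized adjoint, so the dual of \cref{prop:enriched_closed_fib} — an instance of \cref{thm:actionenrichedopfibr} applied to the regular representation — yields at once that $\Comod\to\Comon(\ca{V})$ is enriched in itself, completing the argument.
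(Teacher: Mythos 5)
Your proposal is correct and follows essentially the same route as the paper: the base $\Comon(\ca{V})$ is monoidal closed by \cite[3.2]{MonComonBimon}, the tensor-product opfibred $1$-cell has a parameterized right adjoint by \cite[4.5]{Measuringcomodule}, and the conclusion is the dual of \cref{prop:enriched_closed_fib}. The only difference is bookkeeping: the paper reads \cite[4.5]{Measuringcomodule} as already supplying the parameterized adjoint for the whole square in $\Cat^\B{2}$, whereas you split this into closedness of $\Comod$ plus a separate compatibility check over $V$ --- a check you resolve correctly by noting both internal homs restrict that of $\ca{V}$.
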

Notice that this does not dualize for $\Mod\to\Mon(\ca{V})$, since in general the category of monoids
is not monoidal closed, e.g. rings or $R$-algebras. 

\subsection{Enriched (co)modules over enriched (co)categories}\label{app3}

The above study on enrichment relations between monoids and comonoids, as well as modules and comodules,
can be appropriately extended to their many-object generalizations, in the sense that
a monoid can be thought of as a one-object category.

For a detailed exposition of the notions and constructions that follow, see \cite[\S 7]{PhDChristina} or from a double
categorical perspective \cite{VCocats}.
Briefly, for a symmetric monoidal category with colimits preserved by $\otimes$, we can consider the category of $\ca{V}$-enriched categories
$\VCat$, whose objects are monads in the bicategory of \emph{enriched matrices} $\VMat$ \cite{Varthrenr}. In a dual way,
considering comonads therein, we can construct the category $\VCocat$ of \emph{enriched cocategories}, serving as
a many object generalization of comonoids in $\ca{V}$. A $\ca{V}$-cocategory $\ca{C}_X$ with set of objects $X$
comes equipped with cocomposition and coidentity arrows
\begin{displaymath}
\Delta_x,z\colon\ca{C}(x,z)\to\sum_{y\in X}\ca{C}(x,y)\otimes\ca{C}(y,z),\quad
\epsilon_x\colon\ca{C}(x,x)\to I
\end{displaymath}
in $\ca{V}$, satisfying coassociativity and counitality axioms.
Both categories $\VCat$ and $\VCocat$ are in fact fibred and opfibred, respectively, over
the category of sets, via the usual forgetful functors that give the set of objects of the (co)categories.
They also both live inside $\VGrph$, the category of enriched graphs, which is bifibred over
$\Set$. All these (op)fibrations are monoidal in the sense of \cref{def:monoidalfibration}:
for two $\ca{V}$-graphs (or categories, cocategories)
$G_X$ and $H_Y$, their tensor product is a graph $G\otimes H$ with set of objects $X\times Y$,
and $(G\otimes H)\left((x,y),(z,w)\right)=G(x,z)\otimes H(y,w)$.

When $\ca{V}$ is moreover monoidal closed with products and coproducts,
$\VGrph$ is monoidal closed: for two graphs $G_X$ and $H_Y$, their internal hom is the graph
$\Hom(G,H)$ with set of objects $Y^X$, given by the collection of $\ca{V}$-objects
$$\Hom(G,H)(k,s)=\prod_{x',x}[G(x',x),H(kx',sx)] \text{ for } k,s\in Y^X$$ 
By definition of these structures, the following diagram
\begin{displaymath}
\xymatrix @C=.9in @R=.5in
{\VGrph\ar[d]\ar@<+.8ex>[r]^-{-\otimes G_X}\ar@{}[r]|-{\bot} &
\VGrph\ar[d]\ar@<+.8ex>[l]^-{\Hom(G_X,-)} \\
\Set\ar@<+.8ex>[r]^-{-\times X}\ar@{}[r]|-{\bot} & 
\Set\ar@<+.8ex>[l]^-{(-)^X}}
\end{displaymath}
is a map of adjunctions, therefore all three clauses of \cref{def:monoidalclosedfibr} are satisfied.
\begin{prop}
 Suppose $\ca{V}$ is a symmetric monoidal closed category with products and coproducts. The bifibration
 $\VGrph\to\Set$ mapping a $\ca{V}$-graph to its set of objects is monoidal closed,
 therefore it is enriched in itself.
\end{prop}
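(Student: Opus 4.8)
The plan is to show that the forgetful bifibration $U\colon\VGrph\to\Set$ satisfies the three equivalent clauses of \cref{def:monoidalclosedfibr}, and then to invoke \cref{prop:enriched_closed_fib} to deduce the self-enrichment. Equivalently, and in the spirit of \cref{thm:parameterCat2}, I would verify that the displayed square exhibiting $-\otimes G_X\dashv\Hom(G_X,-)$ over $-\times X\dashv(-)^X$ is a map of adjunctions; this is exactly the data needed to assemble the pointwise adjoints into a parameterized adjoint of the tensor fibred $1$-cell, which is the definition of closedness.

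First I would establish clause (i): that $\VGrph$ and $\Set$ are monoidal closed. For $\Set$ this is the usual cartesian closed structure $-\times X\dashv(-)^X$. For $\VGrph$ I would exhibit, for each fixed $\ca{V}$-graph $G_X$, a natural bijection
\[
\VGrph(F\otimes G,H)\;\cong\;\VGrph\bigl(F,\Hom(G,H)\bigr).
\]
A morphism on the left is a function $\phi\colon W\times X\to Y$ together with arrows $F(w,w')\otimes G(x,x')\to H(\phi(w,x),\phi(w',x'))$ in $\ca{V}$. Currying $\phi$ to $\psi\colon W\to Y^X$, then applying the universal property of the product $\prod_{x,x'}$ and the tensor--hom adjunction of $\ca{V}$ componentwise, transports this data precisely into a function $\psi\colon W\to Y^X$ together with arrows $F(w,w')\to\Hom(G,H)(\psi w,\psi w')$, i.e.\ a morphism into $\Hom(G,H)$. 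This is where the hypotheses that $\ca{V}$ has products and is monoidal closed enter; naturality in $F$, $G$ and $H$ is routine.

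Next I would check clauses (ii) and (iii). Strictness of $U$ is immediate on object-sets, since $U(G\otimes H)=X\times Y=UG\times UH$ and $U\,\Hom(G,H)=Y^X=(UH)^{UG}$, so $U$ preserves the tensor and the internal hom on the nose. For (iii) I would verify that the evaluation $\Hom(G,H)\otimes G\to H$ and the unit $F\to\Hom(G,F\otimes G)$ of the $\VGrph$-adjunction are sent by $U$ exactly to the evaluation $Y^X\times X\to Y$ and the unit of the $\Set$-adjunction; by the componentwise construction above this is forced, and it is precisely the conditions $U\varepsilon=\varepsilon_U$ and $\eta_U=U\eta$. With the three clauses established, \cref{def:monoidalclosedfibr} gives that $U$ is a monoidal closed fibration; and because the tensor of $\VGrph$ preserves cocartesian as well as cartesian liftings (so that $U$ is a monoidal bifibration, \cref{def:monoidalfibration}), it is equally a monoidal closed opfibration. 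Applying \cref{prop:enriched_closed_fib} and its dual then yields that $U\colon\VGrph\to\Set$ is enriched in itself.

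The main obstacle is the internal-hom adjunction for $\VGrph$ in clause (i): one must curry at the level of object-sets and transpose at the level of hom-objects simultaneously, matching the product indexed over pairs of objects of $X$ against the pointwise tensor $F(w,w')\otimes G(x,x')$, and confirm that the resulting bijection is natural in all three graph arguments. Once this transposition is set up carefully, clauses (ii) and (iii) reduce to reading off the object-set behaviour of $U$, and the passage to self-enrichment is a direct citation of \cref{prop:enriched_closed_fib}.
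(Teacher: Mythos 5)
Your proposal is correct and follows essentially the same route as the paper: exhibit the internal hom $\Hom(G,H)$ of $\VGrph$ with object-set $Y^X$ and hom-objects $\prod_{x',x}[G(x',x),H(kx',sx)]$, observe that the resulting square of adjunctions over $-\times X\dashv(-)^X$ is a map of adjunctions so that all three clauses of the monoidal closed fibration definition hold, and conclude by citing the self-enrichment of a monoidal closed fibration. The only difference is that you spell out the currying/transposition argument and the opfibration side in more detail than the paper, which treats these as immediate from the definitions.
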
 

Similarly to how the internal hom of $\ca{V}$ was lifted to an action of comonoids on
monoids in \cref{app2}, the internal hom of $\VGrph$ induces an action
\begin{displaymath}
K:\xymatrix@R=.05in
{\ca{V}\text{-}\B{Cocat}^\op\times\ca{V}\text{-}\B{Cat}
\ar[r] & \ca{V}\text{-}\B{Cat} \\
\qquad(\;\ca{C}_X\;,\;\ca{B}_Y\;)\ar@{|->}[r] & 
\Hom(\ca{C},\ca{B})_{Y^X}}
\end{displaymath}
Its opposite has a parameterized adjoint, again by \cref{totaladjointthm},
\begin{displaymath}
T:\ca{V}\text{-}\B{Cat}^\op\times\ca{V}\text{-}\B{Cat}
\to\ca{V}\text{-}\B{Cocat}
\end{displaymath}
called the \emph{generalized Sweedler hom}. We get the following (lax) opfibred pa\-ra\-me\-te\-ri\-zed adjunction 
\begin{displaymath}
\xymatrix @R=.6in @C=1in 
{\ca{V}\textrm{-}\B{Cat}^\mathrm{op}\ar @/^/[r]^-{T(-,\ca{B}_Y)}
\ar[d]\ar@{}[r]|-{\top} &
\ca{V}\textrm{-}\B{Cocat}\ar @/^/[l]^-{K(-,\ca{B}_Y)^\mathrm{op}}\ar[d] \\ 
\B{Set}^\mathrm{op}\ar @/^/[r]^-{Y^{(-)}}\ar@{}[r]|-{\top} &
\B{Set} \ar @/^/[l]^-{{Y^{(-)}}^\mathrm{op}}}
\end{displaymath}
thus \cref{thm:actionenrichedopfibr} applies again, see \cite[4.38]{VCocats}.

\begin{prop}
Suppose $\ca{V}$ is a locally presentable, monoidal closed category.
The fibration $\VCat\to\Set$ is enriched in the monoidal opfibration $\VCocat\to\Set$,
where both functors send the enriched structure to its set of objects.
\end{prop}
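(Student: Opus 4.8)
The plan is to apply \cref{thm:actionenrichedopfibr} to the specific opfibration $\VCat\to\Set$ together with the monoidal opfibration $\VCocat\to\Set$, following exactly the template established in \cref{app2} for ordinary (co)modules over (co)monoids. The final proposition is the many-object generalization of the enrichment of $\Mon(\ca{V})$ in $\Comon(\ca{V})$, and the strategy is to verify that each hypothesis of \cref{thm:actionenrichedopfibr} holds in this setting, most of which have already been assembled in the paragraphs immediately preceding the statement.

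\begin{proof}
First I would recall that both $\VCat\to\Set$ and $\VCocat\to\Set$ are (op)fibrations that are monoidal in the sense of \cref{def:monoidalfibration}, as noted above via the tensor product of enriched graphs with set of objects $X\times Y$; in particular $\VCocat\to\Set$ is a monoidal opfibration. Next, the key structural input is the action $K$ of $\ca{V}$-cocategories on $\ca{V}$-categories, induced by the internal hom of $\VGrph$ and sending $(\ca{C}_X,\ca{B}_Y)$ to $\Hom(\ca{C},\ca{B})_{Y^X}$, together with the base-level action $Y^{(-)}$ on $\Set$. One must check that $(K,Y^{(-)})$ constitutes a $T$-pseudoaction in the sense of \cref{def:Trepresentation}: since both actions are restrictions of the internal hom of $\VGrph$ to the appropriate subcategories, the compatibility condition \cref{twoactionscompatible} between the total and base actions follows just as in \cref{app2}.

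The decisive hypothesis is the existence of a parameterized adjoint. Here one invokes \cref{totaladjointthm}: the generalized Sweedler hom $T\colon\VCat^\op\times\VCat\to\VCocat$ is obtained as the parameterized adjoint of $K$ via the (lax) opfibred parameterized adjunction displayed above, with base adjunction $-\times X\dashv(-)^X$ on $\Set$. This is precisely the content referenced in \cite[4.38]{VCocats}, so I would cite that for the construction of $T$ and the fact that it assembles into a morphism of two variables with the required commutativity over $\Set$. With the monoidal opfibration, the compatible action, and the parameterized adjoint all in place, \cref{thm:actionenrichedopfibr} applies directly and yields the enrichment of the opfibration $\VCat\to\Set$ in $\VCocat\to\Set$.

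The main obstacle is not the formal application of \cref{thm:actionenrichedopfibr}, which is routine once the ingredients are available, but rather the verification that the generalized Sweedler hom genuinely arises as a \emph{parameterized} opfibred adjoint rather than merely a fiberwise or pointwise one; this is exactly where \cref{totaladjointthm} does the heavy lifting, reducing the question to the existence of right adjoints to the composite functors between fibres. The construction of that fibrewise right adjoint in the cocategory setting is the technically substantial step, and I would defer its details to \cite{VCocats} rather than reproduce them. Because the enrichment is ultimately provided by a \emph{right} adjoint even in the opfibred setting, the asymmetry noted after \cref{thm:actionenrichedopfibr} means one cannot expect $T$ to be partially cocartesian; fortunately \cref{thm:actionenrichedopfibr} does not require this, so the argument goes through.
\end{proof}
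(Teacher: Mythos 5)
Your proposal is correct and follows essentially the same route as the paper: the paper gives no separate proof for this proposition, but the surrounding prose assembles exactly the ingredients you list (the monoidal opfibration structure on $\VCocat\to\Set$, the action $K$ induced by the internal hom of $\VGrph$ with base action $Y^{(-)}$, the generalized Sweedler hom $T$ obtained as a parameterized adjoint via \cref{totaladjointthm}, citing \cite[4.38]{VCocats}) and then applies \cref{thm:actionenrichedopfibr}. The only slight imprecision is that, per \cref{def:fibrenrichedinopfibr}, the opfibration that gets enriched is $\VCat^\op\to\Set^\op$ rather than $\VCat\to\Set$ itself, but your displayed adjunction and argument make clear you mean exactly this.
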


Finally, we can consider many object generalizations of modules and comodules,
namely $\ca{V}$-modules for $\ca{V}$-categories
and $\ca{V}$-comodules for $\ca{V}$-cocategories, see \cite[7.6]{PhDChristina}. The former are quite standard:
an $\ca{A}_X$-module $\Psi$ can also be thought as a $\ca{V}$-profunctor
$\Psi\colon\ca{I}\tickar\ca{A}$ for $\ca{I}$ the unit category.
Objects are the same as $\ca{A}$, and hom-objects are $\Psi(x)\in\ca{V}$ equipped with
$(\sum_{x,x'})\ca{A}(x,x')\otimes\Psi(x')\to\Psi(x')$ satisfying appropriate axioms.
The notion of comodules is dual, and these form global categories much like before, $\VMod$ and $\VComod$.
The internal hom of enriched graphs further restricts to these categories, 
giving an action of $\VComod$ on $\VMod$ via a functor
\begin{displaymath}
\overline{K}:\xymatrix @R=.05in
{\ca{V}\text{-}\Comod^\op\times\ca{V}\text{-}\Mod
\ar[r] & \ca{V}\text{-}\Mod\qquad\\
\qquad(\;\Phi_\ca{C}\;,\;\Psi_\ca{B}\;)\ar@{|->}[r] & 
\Hom(\Phi,\Psi)_{\Hom(\ca{C},\ca{B})}}
\end{displaymath}
where $\Hom(\Phi,\Psi)(t)=\prod_{x}
[\Phi(x),\Psi(tx)]$. It has a parameterized adjoint
\begin{displaymath}
\overline{T}:\ca{V}\text{-}\Mod^\op\times\ca{V}\text{-}\Mod
\to\ca{V}\text{-}\Comod
\end{displaymath}
by \cref{totaladjointthm} which once more heavily relies on the fact
that $\VMod$ is fibred over $\VCat$ and $\VComod$ is opfibred $\VCocat$,
and there exists an adjunction between the base categories:
\begin{equation}\label{hugediag2}
\xymatrix @R=.6in @C=1in 
{\ca{V}\text{-}\Mod^\op\ar@/^/[r]^-{\overline{T}(-,\Psi_\ca{B})}
\ar[d] \ar@{}[r]|-{\top} &
\ca{V}\text{-}\Comod \ar@/^/[l]^-{\overline{K}(-,\Psi_\ca{B})^\mathrm{op}}\ar[d] \\ 
\ca{V}\textrm{-}\B{Cat}^\mathrm{op}\ar @/^/[r]^-{T(-,\ca{B}_Y)}\ar@{}[r]|-{\top} &
\ca{V}\textrm{-}\B{Cocat}\ar @/^/[l]^-{K(-,\ca{B}_Y)^\mathrm{op}}}
\end{equation}

\begin{prop}
If $\ca{V}$ is a locally presentable symmetric monoidal closed categor,
the fibration $\VMod\to\VCat$ is enriched in the opfibration $\VComod\to\VCocat$.
\end{prop}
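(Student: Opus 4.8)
The plan is to apply \cref{thm:actionenrichedopfibr}, exactly as in the one-object situation of \cref{app2}. By \cref{def:fibrenrichedinopfibr}, enriching the fibration $\VMod\to\VCat$ in the symmetric monoidal opfibration $\VComod\to\VCocat$ means that the associated opposite opfibration is an enriched opfibration over $\VComod\to\VCocat$, and this is precisely the conclusion of that theorem once its hypotheses are verified. The key ingredient---the (lax) opfibred parameterized adjunction whose enriched hom is the generalized Sweedler hom $\overline{T}$ delivering the $\VComod$-valued hom-objects---has already been assembled in \cref{hugediag2} via \cref{totaladjointthm}.

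First I would confirm that $\VComod\to\VCocat$ is a symmetric monoidal opfibration in the sense of \cref{def:monoidalfibration}. This is inherited from the monoidal structure on $\VGrph$ recalled above, whose tensor has object set $X\times Y$ and hom-objects $G(x,z)\otimes H(y,w)$; the symmetry comes from that of $\ca{V}$, and the tensor of the total category preserves cocartesian liftings since $\VGrph\to\Set$ is a bifibration. Next I would verify the remaining hypotheses of \cref{thm:actionenrichedopfibr} for the action pair $(\overline{K},K)$. Because both the total-level action $\overline{K}$ and the base-level action $K$ are restrictions of the single internal hom $\Hom(-,-)$ of enriched graphs to the appropriate subcategories of (co)modules and (co)categories, the structure isomorphisms of the top action lie over those of the bottom action by construction, so the compatibility \cref{twoactionscompatible} required by \cref{def:Trepresentation} holds automatically. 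The one genuinely technical point is the cartesianness of the top action $\overline{K}$, which follows from the corresponding verification in \cite{VCocats}, mirroring the cartesian top action used in \cref{app2}.

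With the monoidal opfibration structure, the compatible action, and the parameterized adjoint $\overline{T}$ of \cref{hugediag2} all in place, \cref{thm:actionenrichedopfibr} yields the enriched opfibration structure, which \cref{def:fibrenrichedinopfibr} then reinterprets as the desired enrichment of the fibration $\VMod\to\VCat$. The main obstacle is verificational rather than conceptual: one must confirm that the closed and monoidal structure on $\VGrph$ restricts coherently along the forgetful functors down to the many-object (co)module level, and in particular that the parameterized adjunction produced by \cref{totaladjointthm} is compatible with the action so that \cref{twoactionscompatible} holds genuinely at this level and not merely at the level of monoids and comonoids treated in \cref{app2}.
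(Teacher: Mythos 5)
Your proposal is correct and follows essentially the same route as the paper: invoke \cref{thm:actionenrichedopfibr} for the opposite opfibration $\VMod^\op\to\VCat^\op$, after checking that $\VComod\to\VCocat$ is a monoidal opfibration, that the pair $(\overline{K},K)$ restricted from the internal hom of $\VGrph$ forms a compatible opfibred action with $\overline{K}$ cartesian, and that the parameterized adjoint $\overline{T}$ from \cref{hugediag2} supplies the enriched hom. The paper simply delegates the same verificational points you flag (monoidal structure of $\VComod$, cartesianness of $\overline{K}$, existence of the oplax opfibred parameterized adjoint) to specific results in \cite{PhDChristina}.
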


\begin{proof}
\cref{thm:actionenrichedopfibr} applies, to first establish the enrichment of the opfibration
$\VMod^\op\to\VCat^\op$. First of all, $\VComod\to\VCocat$ is a monoidal opfibration by definition of the respective products
and cartesianness of $\otimes_{\VComod}$, \cite[7.7.6]{PhDChristina}. The commutative square of categories and functors
\begin{displaymath}
\xymatrix @C=.6in
{\VComod\times\VMod^\op\ar[r]^-{\overline{K}^\op}\ar[d] & \VMod^\op\ar[d] \\
\VCocat\times\VCat^\op\ar[r]_-{K^\op} & \VCat^\op}
\end{displaymath}
constitutes an opfibred action, since both $K$ and $\overline{K}$ are actions, $\overline{K}$ preserves cartesian arrows
by \cite[7.7.3]{PhDChristina} and the action axioms are the one above each other as per their definitions. Finally,
this opfibred 1-cell has an oplax opfibred parameterized adjoint by \cite[7.7.5]{PhDChristina}, and the proof is complete.
\end{proof}

\subsection{Comparison with existing notions}\label{sec:discussion}

The above examples were the ones that motivated the proposed enriched fibration notion -- although more should be identified
in future work. In this final section, we would like to discuss why other existing approaches
were not applicable, due to the nature of these cases.

Recall that assuming the axiom of choice, one can construct an equivalence between fibrations $\ca{A}\to\caa{X}$
and indexed categories, i.e. pseudofunctors $\caa{X}^\op\to\Cat$ via the classic \emph{Grothendieck construction}
\cite{Grothendieckcategoriesfibrees}. More recently \cite{Framedbicats,MonoidalGr} this correspondence
has been lifted between the respective monoidal structures; we believe that a (global) enriched version
of the Grothendieck construction in the future, which in a fibrewise sense appears in \cite{BeardsleyWong}, will shed more light to the tight
connections between our enriched fibration notion and the ones that follow. For the moment, we only sketch some of the main
relevant theory and differences.
 
M. Bunge in \cite{Bunge} first introduced the notion of an $\caa{S}$-\emph{indexed} $V$-\emph{category}, for $\caa{S}$ an
elementary topos and $V$ an $\caa{S}$-indexed \emph{monoidal} category $V\colon\caa{S}^\op\to\MonCat$. The goal of this work
was to provide a general context in order to compare as well as clarify certain misconceptions
regarding different completions on 2-categories, such as the Karoubi, Grothendieck, Cauchy and Stack completion.

Independently, M. Shulman in \cite{Enrichedindexedcats} also develops a theory of \emph{enriched indexed categories} 
over base categories $\caa{S}$ with finite products. The motivation in that paper was to capture and study `mixed' fibred,
indexed and internal structures in various contexts, such as Parameterized and Equivariant Homotopy Theory, abelian sheaves and many more.

Briefly, for $\caa{S}$ cartesian monoidal, take $V$ to be an $\caa{S}$-indexed monoidal category, equivalently viewed as a monoidal fibration
$\int V\colon\ca{V}\to\caa{S}$. A $V$-enriched indexed category $A$ is simultaneously indexed (or fibred) over the same $\caa{S}$
and also `fibrewise' enriched in $\ca{V}$: every category (or fiber) $A(s)$ for $s\in\caa{S}$ is $V(s)$-enriched,
and the reindexing functors are fully faithful enriched under the appropriate change of base. 
Although this formulation employs the same notion of a monoidal fibration (\cref{def:monoidalfibration})
as the base of the enrichment, there are some crucial differences resulting in two separate definitions,
\cite[2.4]{Bunge} - \cite[4.1]{Enrichedindexedcats} and \cref{def:enrichedfibration}.

First of all, Bunge's and Shulman's approach only concerns enrichment in fibrations over monoidal categories
whose tensor product is the cartesian product. This is fundamental for the development and definitions,
and not a special case of something more general; of course this was relevant to their examples at hand.
On the contrary, for our examples this is evidently not the case:
in \cref{hugediag1,hugediag2} the base monoidal categories of the monoidal fibrations,
$\Comon(\ca{V})$ and $\ca{V}$-$\B{Cocat}$, are non-cartesian.

Moreover, the notion of an enriched indexed category roughly
expressed in the fibred world, essentially refers to a fibration `enriched' in another fibration over the same base,
approximately depicted as
\begin{displaymath}
\xymatrix @C=1in @R=.5in
{\ca{A}\ar@{-->}[r]^-{\textrm{`fibrewise' enriched}} 
\ar[dr]_-{\mathrm{fibred}} & 
\ca{V} \ar[d]^-{\mathrm{fibred}} \\
& \caa{S}.} 
\end{displaymath}
In our examples, this fibrewise enrichment is certainly not the case: the fibre categories of our monoidal fibrations, like 
$\Comod_\ca{V}(C)$, do not even have a monoidal structure themselves in order to serve as enriching bases. 
Furthermore, even if in \cite[\S 7]{Enrichedindexedcats}
there is a short treatment of changing the indexed monoidal enriching base, and the development
in \cite{Bunge} is a special case of this via the identity functor on $\caa{S}$,
in our context the enriched fibration concept involves simultaneous
enrichments between both the total and the base categories of the two fibrations
as essential building blocks of the structure:
\begin{displaymath}
\xymatrix @C=.8in @R=.5in
{\ca{A}\ar@{-->}[r]^-{\mathrm{enriched}} 
\ar[d]_-{\mathrm{fibred}} & \ca{V} \ar[d]^-{\mathrm{fibred}} \\
\caa{X}\ar@{-->}[r]^-{\mathrm{enriched}} & \caa{W}} 
\end{displaymath}

In conclusion, even if there are strong conceptual similarities between the two definitions of an enriched fibration and
indexed $\ca{V}$-category, our definition does not seem to even restrict in a straightforward way 
to the case of fibrations over the same base, since  
the monoidal category $\caa{W}$ is not 
in principle enriched over itself, nor via some sort of an identity or projection functor.
As mentioned earlier, future work would aim to clarify how these two theories compare in more detail and depth.
What is admittedly striking though is that several different goals and motivations have separately led
to the need for a theory that combines fibred structure over a base topos or (cartesian) monoidal category,
and enriched structure.

\bibliographystyle{alpha}
\bibliography{myreferences}

\end{document}